\documentclass[10pt]{article}
\textwidth= 5.00in
\textheight= 7.4in
\topmargin = 30pt
\evensidemargin=0pt
\oddsidemargin=55pt
\headsep=17pt
\parskip=.5pt
\parindent=12pt
\font\smallit=cmti10

\usepackage{amssymb,latexsym,amsmath,epsfig,amsthm} 
\usepackage{chessboard}
\usepackage[hidelinks]{hyperref}
\normallineskip=0pt 
\usepackage{tikz} 
\usetikzlibrary{arrows}
\setchessboard{margin=true,
               moverstyle=triangle,
               tinyboard,
               labelbottom=true,
               labelleft=true,
               labelfontsize=6pt,
               labelleftwidth=2ex,
               linewidth=.05ex,
               border=false
              }

\makeatletter

\renewcommand\section{\@startsection {section}{1}{\z@}
{-30pt \@plus -1ex \@minus -.2ex}
{2.3ex \@plus.2ex}
{\normalfont\normalsize\bfseries}}

\renewcommand\subsection{\@startsection{subsection}{2}{\z@}
{-3.25ex\@plus -1ex \@minus -.2ex}
{1.5ex \@plus .2ex}
{\normalfont\normalsize\bfseries}}

\renewcommand{\@seccntformat}[1]{\csname the#1\endcsname. }

\makeatother

\newcommand{\Ch}{{\mathfrak{Ch}}}
\newcommand{\omegaoneCh}{\omega_1^\Ch}
\newcommand{\omegaoneChi}{\omega_1^{\baselineskip=0pt\vtop to 7pt{\hbox{$\scriptstyle\Ch$}\vskip-1pt\hbox{\,$\scriptscriptstyle\sim$}}}}
\newcommand{\omegaoneChc}{\omega_1^{\Ch,c}}
\newcommand{\omegaoneChthree}{\omega_1^{\Ch_3}}
\newcommand{\omegaoneChthreei}{\omega_1^{\baselineskip=0pt\vtop to 7pt{\hbox{$\scriptstyle\Ch_3$}\vskip-1.5pt\hbox{\,$\scriptscriptstyle\sim$}}}}
\newcommand{\omegaoneChthreec}{\omega_1^{\Ch_3,c}}
\newcommand{\Z}{{\mathbb Z}}
\newcommand{\df}{\it}
\newtheorem{theorem}{Theorem}
\newtheorem{definition}[theorem]{Definition}
\newtheorem{observation}[theorem]{Observation}
\newtheorem{conjecture}[theorem]{Conjecture}
\newtheorem{lemma}[theorem]{Lemma}
\newtheorem{corollary}[theorem]{Corollary}
\newcommand{\Godel}{G\"odel}
\def\<#1>{\langle#1\rangle}
\newcommand{\smalllt}{\mathrel{\mathchoice{\raise2pt\hbox{$\scriptstyle<$}}{\raise1pt\hbox{$\scriptstyle<$}}{\raise0pt\hbox{$\scriptscriptstyle<$}}{\scriptscriptstyle<}}}
\newcommand{\ltomega}{{{\smalllt}\omega}}
\newcommand{\of}{\subseteq}
\newcommand{\rank}{\mathop{\rm rank}}
\newcommand{\set}[1]{\{\,{#1}\,\}}

\begin{document}

\begin{center}
\uppercase{\bf Transfinite game values in infinite chess}
\vskip 20pt
{\bf C. D. A. Evans\footnote{The first author holds the chess title of National Master.}}\\
{\smallit Program in Philosophy\\
   The Graduate Center of The City University of New York\\
   365 Fifth Avenue, New York, NY 10016}\\
{\tt c.alexander.evans@gmail.com}\\
\vskip 10pt
{\bf Joel David Hamkins\footnote{The research 
        of the second author has been supported in part
        by NSF grant DMS-0800762, PSC-CUNY grant 64732-00-42 and Simons
        Foundation grant 209252. The second author wishes to thank Horatio M. Hamkins for assistance with chess analysis. The authors are grateful
        for a key suggestion made by W. Hugh Woodin to the second author at the Computational Prospects of Infinity Workshops at the National University of Singapore in August, 2011 and for helpful remarks of the anonymous referee, including especially a simplification in the position of figure~\ref{Figure.ValueOmega}. Commentary concerning this paper can be made at http://jdh.hamkins.org/game-values-in-infinite-chess, where animations of some of the positions can also be found.}}\\
{\smallit Program in Mathematics, Program in Philosophy, Program in Computer Science\\
 The Graduate Center of The City University of New York\\
 365 Fifth Avenue, New York, NY 10016\\ 
 \&\quad Mathematics, College of Staten Island of CUNY, Staten Island, NY 10314}\\
 {\tt jhamkins@gc.cuny.edu, http://jdh.hamkins.org}\\
\end{center}
\vskip 30pt
\phantom{\centerline{\smallit Received: , Revised: , Accepted: , Published: }} 
\vskip 30pt

\centerline{\bf Abstract}
\noindent
 We investigate the transfinite game values arising in infinite chess, providing both upper and lower bounds on the supremum of these values---the omega one of chess---denoted by $\omegaoneCh$ in the context of finite positions only and by $\omegaoneChi$ in the context of all positions, including those with infinitely many pieces. For lower bounds to $\omegaoneChi$, we present specific positions with transfinite game values of $\omega$, $\omega^2$, $\omega^2\cdot k$ and $\omega^3$. By embedding trees into chess, we show that there is a computable infinite chess position that is a win for white if the players are required to play according to a deterministic computable strategy, but which is a draw without that restriction. Finally, we prove that every countable ordinal arises as the game value of a position in infinite three-dimensional chess, and consequently the omega one of infinite three-dimensional chess is as large as it can be, namely, $\omegaoneChthreei=\omega_1$.

\pagestyle{myheadings}
\thispagestyle{empty}
\baselineskip=12.875pt
\vskip 30pt

%
%
%
%
In infinite chess---chess played on an infinite chessboard---the familiar chess pieces move about on an expansive edgeless plane tiled with black and white squares in the usual chessboard pattern, stretching indefinitely in all four directions. Each player aspires to place the opponent's king into checkmate. There is no standard starting position, but rather one investigates the nature of the game beginning from a specified initial position, not necessarily finite. Since checkmates, when they occur, do so after finitely many moves, chess is what is known technically as an open game and consequently is subject to the game-theoretic analysis of open games using ordinal game values. In particular, the winning positions in infinite chess are exactly those having an ordinal game value, and the winning strategies are those that eventually reduce this game value, forcing it zero, which is when checkmate occurs.

In this article, we investigate the range of transfinite game values exhibited by positions of infinite chess. We shall exhibit specific positions in infinite chess having transfinite game values of $\omega$, $\omega^2$, $\omega^2\cdot k$ and $\omega^3$. We define the omega one of chess to be the supremum of the game values exhibited by positions in infinite chess, using this phrase to refer both to the lightface form $\omegaoneCh$, which is the supremum of the game values arising in positions having only at most finitely many pieces, and the boldface form $\omegaoneChi$, which is the supremum of the game values arising in any position of infinite chess, including those having infinitely many pieces. We conjecture that the soft theoretical upper bounds provided by theorem \ref{Theorem.UpperBounds} are optimal, although the best-known lower bounds, including the specific positions provided in this article, are not yet close. By embedding infinite finitely branching trees into positions of infinite chess, we show in section \ref{Section.ComputableStrategies} that there is a computable infinite chess position, such that our judgement of whether it is a win for white or a draw depends on whether we insist that the players play according to a deterministic computable procedure. Namely, in the category of computable play the position is a win for white in the sense that white has a computable strategy that defeats any computable strategy for black; but without that computable restriction the position is a draw, since black has a non-computable strategy forcing a draw. Finally, by embedding infinitely-branching trees into positions of infinite three-dimensional chess, explained in section \ref{Section.ThreeD}, we prove in theorem \ref{Theorem.3DAnyValueAtttained} the analogue of our conjecture \ref{Conjecture.OmegaOneCH=OmegaOne} for infinite three-dimensional chess, by proving that every countable ordinal arises as the game value of a position in infinite three-dimensional chess, and consequently the omega one of infinite three-dimensional chess is as large as it can be, namely, $\omegaoneChthreei=\omega_1$.

Before proceeding, let us say a bit more about the rules of infinite chess. The game has two players, black and white, each controlling pieces of their own color, taking turns. The chess board consists of squares arranged in the structure of the integer plane $\Z\times\Z$, each of which is either empty or holds at most one chess piece. The chess pieces---kings, queens, rooks, bishops, knights and pawns---move on the infinite board in the manner that any chess player would expect. Because infinite chess often involves extremely long play and we are accordingly patient, we abandon the standard tournament 50-move rule (stating that the game is a draw, if there should be 50 moves without a capture or a pawn move). As there are no edges to the board, the pawn promotion rules never take effect, and there is similarly no opportunity for {\it en passant} or castling. The game ends when one player places the other's king into checkmate, meaning that the king is attacked in such a way that cannot be prevented. Games for which play continues unendingly are a draw; neither player has achieved checkmate and so neither player has won. Because of this, we may abandon the three-fold-repetition draw rule, because repeating indefinitely leads to infinite play, and finite repetitions needn't have been repeated in the first place, if a draw was not desired. If a player has no legal move on their turn, then the position is {\it stalemate}, which is a draw. It is customary in infinite chess to consider positions in which each side has at most one king. In a position where a player has no king, the other player hopes for at best a draw, since there is no possibility of checkmate. In all the figures of this article, white pawns move upward and black pawns downward, and one should presume that the board continues beyond what is pictured, extending whatever pattern has been established in the visible portion of the board. Many of the diagrams include a turn indicator, a small black or white triangle, signalling which player is to move first.

Let us introduce some terminology that will assist our later discussions. A {\df board position} in infinite chess is an assignment to each location on the board a value indicating whether or not there is a piece there and if so, which piece. A {\df position} is a board position together with an indication of whose turn it is to play. A position is said to be {\df computable} if the board position function is a Turing computable function. For decidability questions, we insist on a representation that explicitly indicates of each piece type and the empty-square type how many squares on the board exhibit that type, whether this is a finite natural number or infinity. In particular, one should be able to compute directly from a chess position representation exactly how many pieces there are on the board. For chess positions having only finitely many pieces, it follows that an equivalent representation would simply be a complete list of the pieces that occur and their locations. A {\df strategy} for one of the players from a given position $p$ is a function that tells the player, given the prior sequence of moves from $p$, what he or she is to play next. Once the initial position $p$ has been fixed, then a strategy $\tau$ from $p$ is a function from finite sequences of moves to a next move, and is thus naturally coded by a real. A strategy $\tau$ is {\df computable}, if this function is a computable function in the sense of Turing computability. If $\tau$ and $\sigma$ are strategies, then $\tau*\sigma$ is the play resulting from having the first player play strategy $\tau$ and the second player play strategy $\sigma$. The strategy $\tau$ defeats $\sigma$, if this play results in a win for the first player, and similarly for a loss or a draw. A strategy $\tau$ is a {\df winning} strategy, if it defeats all opposing strategies.

The topic of infinite chess has recently been considered in several popular math forums, featuring in several MathOverflow questions, including Johan W\"astlund's question \cite{MO63423Wästlund:CheckmateInOmegaMoves}, which directly inspired this article, as well as in Richard Stanley's question \cite{MO27967Stanley:DecidabilityOfInfiniteChess}, which had inspiried the second author's previous joint article \cite{BrumleveHamkinsSchlicht2012:TheMateInNProblemOfInfiniteChessIsDecidable}, and also in the IBM Research Ponder This Challenge for December 2011 \cite{IBMPonderThisChallenge2011:InfiniteChess}.

\section{Game values and the omega one of chess}\label{Section.TheOmegaOneOfChess}

Chess players are familiar with the rich collection of chess problems falling under the descriptions, {\it mate-in-$1$}, {\it mate-in-$2$}, and so on. A position is said to be {\df mate-in-$n$} for the first player, if there is a strategy leading to checkmate in at most $n$ moves, regardless of how the opponent plays. This familiar mate-in-$n$ concept is generalized in the context of infinitary game theory by the concept of  ordinal game values, a concept that is applicable not only to infinite chess, but to any open game, a game which when won, is won at a finite stage.\footnote{The ordinal game value of a game is also commonly known as the ordinal {\df rank} of the game, but we will not use this term this way in this article, since {\df rank} in chess already refers to a horizontal row on the board. Although the possibility of a draw in chess introduces a complication to the standard treatments of open games, nevertheless the theory
of ordinal game value works fine.} The basic idea is to assign ordinal values to the positions from which white can force a win, and these values provide a measure of white's distance from victory. A mate-in-$n$ position for white, for example, where $n$ is a finite natural number and where $n$ is optimal, will have value exactly $n$. A position with value $\omega$, the first infinite ordinal, in contrast, will be a position with black to play, but every play results in a mate-in-$n$ position for white, with $n$ as large a finite number as desired. Although the value is infinite, white can play from such a position so as to win in finitely many moves; but the number of moves is controlled by black, determined by his first move. For larger game values, white will still force a win in finitely many moves, but black will exert control not only at the beginning but also periodically during play, in a way we shall explain, determining how much longer play will continue. The general definition of game value is as follows.

\begin{definition}\rm
 The {\df game value} (for white) of a position in infinite chess is defined as for any open game by recursion. The positions with value $0$ are precisely those in which white has already won.\footnote{Although they will cause no trouble in this article, one should be aware that when one allows truly arbitrary starting positions in infinite chess, then there are various weird boundary cases to consider, involving positions that could not be reached from any other position with legal play, such as a position in which both kings are in check. We prefer to resolve them as discussed in \cite{BrumleveHamkinsSchlicht2012:TheMateInNProblemOfInfiniteChessIsDecidable}, following the idea that checkmate has to do fundamentally with ensuring the necessary capture of the opposing king.} If a position $p$ has white to move, then the value of $p$ is $\alpha+1$ if and only if $\alpha$ is minimal such that white may legally move from $p$ to a position with value $\alpha$. If a position $p$ has black to play, where black has a legal move from $p$, and every move by black from $p$ has a value, then the value of $p$ is the supremum of these values.
\end{definition}

The definition identifies the positions with ordinal value $\alpha$, by induction on $\alpha$; some positions may be left without any value. The {\bf fundamental observation of game values} is that the positions having an ordinal value are precisely the positions that are winning for white. Namely, if a position has an ordinal value, then white may play the {\it value-reducing} strategy, playing on each move so as to reduce the value; since black cannot move from a position with value to one with higher value or without value, it follows that white's value-reducing strategy leads to a strictly descending sequence of values, which because there is no infinite descending sequence of ordinals must eventually terminate at $0$, a win for white. Conversely, if a position has no ordinal game value, then black may follow the {\it value-maintaining} strategy, playing so as to maintain that the position has undefined value; from such a position, white may not play to a position having a value, since if white could play to a valued position, then the original position would have had a value. In this way, black can guarantee that play from an unvalued position either continues indefinitely or reaches a stalemate or a win for black, and thus attains at least a draw. So the positions having an ordinal value for white are precisely the positions from which white can force a win. Of course one may similarly define values for black, and the positions having a value for black are precisely the positions from which black may force a win.

Let us illustrate the game-theoretic meaning of these game values in the case of various smallish ordinals. It is not difficult to see that the positions with finite value $n$ are precisely the mate-in-$n$ positions for white, with optimal $n$. A position with value $\omega$, as we mentioned earlier, is a definite win for white, in finitely many moves, but the number of such moves can be pushed as high as desired by black. Facing a position with value $\omega$, black must make a move with a strictly smaller value, and by playing a move with value $n$ black in effect announces that he can ensure that the game will last another $n$ steps. Thus, a position with value $\omega$ (or higher) is one for which black can make an announcement at the beginning, which is a lower bound for the length of subsequent play before white wins.

A position with value $\omega+n$ is a position in which white can play so as to achieve a position with value $\omega$ in at most $n$ moves. A position with value $\omega\cdot 2$ is a position that is a win for white in finitely many moves, but in which twice during play, black may announce a number such that play will proceed at least that much longer before the next announcement; that is, on the first move, black announces the number $n$ by playing to a position with value $\omega+n$, from which he can force play to continue for $n$ additional moves before the position comes to have value $\omega$, at which time black announces a second number $m$ by playing to a position with value $m$, after which he can survive for another $m$ moves.

These examples show that from a position with value $\alpha$, the players are in effect counting down from $\alpha$. To make this idea precise, consider for any ordinal $\alpha$ the simple game {\it counting-down-from-$\alpha$}, in which black aims to count down from $\alpha$ while white observes. Specifically, black will play successively a strictly descending sequence of ordinals $\alpha>\alpha_0>\alpha_1>\alpha_2>\cdots$, while white says ``OK, fine'' at each step. The game continues until black plays $\alpha_n=0$,
at which time white wins. Since there is no infinite descending sequence of ordinals, white will inevitably win, but the length of the game is controlled by black. We find it to be a simple and informative exercise, which we encourage the reader to undertake, to show that the value of the counting-down-from-$\alpha$ game is exactly $\alpha$. In particular, faced with a successor ordinal $\beta+1$, black can attempt to conserve the value as much as possible, delaying his loss, by playing $\beta$, which maximizes the space of his remaining play. But faced with a limit ordinal $\beta$, black must play a strictly smaller ordinal, which in general will be considerably smaller than $\beta$, even if he may choose ordinals arbitrarily high in $\beta$. This is the essential feature of all games with a limit ordinal value $\beta$, for black can play to positions with values unbounded in $\beta$, but any such position will represent a loss in comparison with what might have been.\footnote{We should like to emphasize that the ordinal value $\alpha$ of an open game is not the same as it having Conway game number $\alpha$; the latter represents in a sense the number of free moves available for a designated player, in a context where one will consider sums of games. This is like the counting-down-from-$\alpha$ game where black counts down, while white plays elsewhere. The sum of two such games, with the roles swapped in one summand, is winning for the second player, who can copy moves, and the value is precisely $\alpha$, because the loser is in effect counting down from $\alpha$.}

In light of this, a position in infinite chess with value $\omega\cdot k$ is a position where black will be able to make $k$ announcements, each of which is the number of moves before the next announcement or the final loss when the announcements are used up. Each announced number $n_i$ corresponds to the ordinal $\omega\cdot i+n_i$ that black drops to from the limit ordinal $\omega\cdot(i+1)$, when the moves from his previous announcement were used up. For example, in the case of $\omega\cdot 3$, black might begin by announcing the number $547$, say, by playing to a position with value $\omega\cdot 2+547$, with subsequent play continuing for $547$ additional moves before arriving at a position with value $\omega\cdot 2$; at this time, black makes his second announcement, say, $2013$, by playing to a position with value $\omega+2013$, from which he can play for $2013$ additional moves before arriving at a position with value $\omega$; at this time, he makes his final announcement, say, $10^{2^{100!}}$, by playing to a position with that value; from this position he will be able to play for a considerable time, but ultimately lose at the end with no further announcements.

A position with value $\omega^2$ is a position that white will win in finitely many moves, but black can announce an arbitrary finite number $k$ at the start and play to a position with value at least $\omega\cdot k$. From that position, as in the previous paragraph, black will be able to make $k$ additional announcements, such that play will continue after each such announcement for at least that long before the next announcement. A position with value $\omega^2\cdot m$ is a position in which white can force a win, but black will be able to make $m$ large announcements, such that each large announcement will be a number, which is the number of additional announcements he will be enabled to make, each of which describes the length of play to be undertaken before the next announcement, and the next large announcement being made only after all the play from previous announcements is completed. In effect, black will play the counting-down-from $\omega^2$ game $m$ times in succession. A position with value $\omega^3$ is a position in which black can announce an arbitrary finite value $m$, and then proceed from a position with value $\omega^2\cdot m$ or in the interval $[\omega^2\cdot m,\omega^2\cdot(m+1))$.

Let us now begin to show that infinite values actually arise in infinite chess, even amongst positions with only finitely many pieces. In section \ref{Section.InfinitePositionsWithTransfiniteGameValue}, we will present infinite positions having larger transfinite values, and in section \ref{Section.ThreeD}, we will show that every countable ordinal arises as a game value of a position in infinite three-dimensional chess. We begin with minimal positions having arbitrarily large finite values.

\begin{observation}\label{Observation.MateInn}
 For each finite number $n$, there is a position in
 infinite chess, having four pieces, with value $n$. For $n> 5$, there are no such
 positions having three pieces, although there are three-piece positions with every value up to $5$.
\end{observation}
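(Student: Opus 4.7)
The observation has three parts: exhibit four-piece positions of each finite value $n$, exhibit three-piece positions of values $1$ through $5$, and prove that no three-piece position has value exceeding $5$.

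For the four-piece existence, I would give a uniform family of mate-in-$n$ positions built from the white king together with two heavy attackers (say a queen and a rook, or two queens) against the lone black king. The configuration localizes play to a bounded pocket in which the attackers' lines together with the white king's field cover all eight escape squares of the black king; by starting the attackers along a forced approach-line of length $n$ one obtains a position in which white needs exactly $n$ of his own moves to deliver mate. This is a standard K+Q+R (or K+Q+Q) mating routine transplanted to the infinite board.

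For the three-piece positions of values $1,2,3,4,5$, I would exhibit five explicit diagrams. A preliminary observation is that the white king never delivers check, since kings cannot stand adjacent; hence any three-piece position whose pieces include WK contains at most one true attacker of the black king. Such a position admits no mate on the edgeless plane, because WK, lying at Chebyshev distance at least $2$ from BK, covers at most three of BK's eight escape squares, and a single remaining attacker cannot simultaneously check BK, cover the other five escape squares, and stay defended from capture. So the five realizing positions should consist of BK together with two non-king pieces, at least one being a white attacker (with perhaps a black pawn or rook serving as a blocker of BK's own escape squares). Each value $1$ to $5$ is then confirmed by a short direct calculation of game value.

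The principal task, and the chief obstacle, is the upper bound. The plan is a case analysis on the types of the three pieces. By the preliminary observation, positions containing WK have no value, so attention reduces to BK together with two other pieces. For each choice of piece-types---two queens, queen and rook, two rooks, queen or rook together with a minor piece, and the analogous choices augmented by a black blocker---the analysis rests on the following quantitative fact: with only two white attackers and no white king for support, every mating net white erects after his $k$-th move either leaves one of the attackers capturable at close range by BK or is so loose that BK's next move exits the net, so white can maintain forcing pressure for only a bounded number of moves. Carrying out the count, case by case, yields the bound of exactly $5$. The difficulty is that $5$ is sharp, so each piece-type combination must be analyzed to identify both the strongest mating threat white can sustain and the precise escape route along which BK eventually reaches an open region of the plane from which no further attack can succeed.
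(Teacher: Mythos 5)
Your four-piece construction is essentially the paper's: king plus queen plus rook rolling the lone black king toward the white king across a gap of length $n$, with the lower bound coming from the fact that the heavy pieces alone cannot mate, so the kings must first be brought into proximity. That part is fine, though you assert rather than argue the ``exactly $n$'' lower bound.

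The three-piece upper bound is where your plan has a genuine gap, and the gap is logical rather than merely one of omitted detail. Your central ``quantitative fact''---that two unsupported white attackers can \emph{maintain forcing pressure} for only a bounded number of moves before the black king slips out of every net---is the wrong statement to prove. If white could keep the king under attack for only $5$ moves and then black escaped to the open plane forever, the position would have \emph{no} ordinal value at all (it would be a draw), not value at most $5$. The value of a winning position is the number of moves white \emph{needs}, not the number of moves white can sustain threats; so an upper bound of $5$ must come from exhibiting a fast forced \emph{win}, not from bounding the duration of white's initiative. The paper's route is much shorter and avoids your case analysis entirely: first, among all three-piece combinations, only two white queens versus the black king admits any (deliverable) checkmate configuration at all---your own preliminary observation already kills every combination containing the white king, and a single white attacker plus a black blocker likewise cannot cover enough of the king's field, so every other combination is simply a draw and contributes nothing to the supremum; second, two queens against a bare king is always at worst mate-in-$5$ by an explicit procedure (two moves to trap the king between two files controlled by the queens, then close the gap to two squares, then mate). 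Your proposed piece-by-piece ``count'' would have to rediscover both of these facts, and as framed it conflates ``white cannot win'' with ``white wins quickly''; you should replace the forcing-pressure lemma with (a) a classification showing all non-two-queen combinations are valueless and (b) the constructive $\le 5$-move two-queen mate.
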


\begin{proof}
Consider the positions of the form in figure \ref{Figure.MateIn17}, shown in the case $n=17$, consisting of a white king, white queen, white rook and black king, with $n$ squares separating the kings.
\begin{figure}[h]
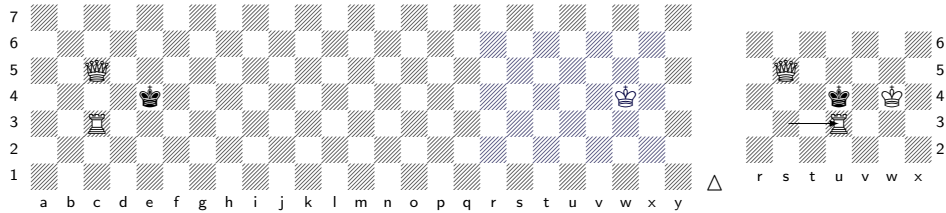

\chessboard[maxfield=y7,
            tinyboard,
            labelbottom=true,
            labelleft=true,
            labelfontsize=6pt, 
            labelleftwidth=1.6ex,
            mover=w,
            border=false,
            margin=true,
            coloremphstyle=\color{blue!25!black},
            empharea=r2-x6,
            setfen=%
/%
/%
2Q/%
4k17K/%
2R/%
/%
/%
]%
\hfil
\raise 10pt\hbox{\chessboard[maxfield=z7,
            printarea=r2-x6,
            tinyboard,
            labelbottom=true,
            labelleft=false,
            labelright=true,
            labelfontsize=6pt,
            showmover=false,
            border=false,
            margin=true,
            pgfstyle=straightmove,
            backmove={s3-u3},
            setfen=%
/%
/%
18Q/%
20k1K/%
20R/%
/%
/%
]%
}%
\caption{White to mate in 17 on infinite edgeless board}\label{Figure.MateIn17}
\end{figure}
With white to move, white may roll the black king to the right with {\tt 1.Re3+ Kf4 2.Qe5+ Kg4 3.Rg3+ Kh4 4.Qg5+}, leading to {\tt 16.Qs5+ Ku4 17.Ru3\#} checkmate, as shown at the right. In the general case, where the kings are separated with $n$ steps, this strategy forces check-mate in exactly $n$ steps, once the black king is rolled against the white king. Since there is no check-mate position involving just the white queen and rook, any check-mate position must bring the kings into proximity, which takes at least $n/2$ steps. So the value of the position is at least $n/2$ and at most $n$ (we believe it is exactly $n$). Thus, there are positions of unbounded finite value with these four pieces, and it follows that every value is realized during the course of play from those positions.

For positions with three pieces, there is no checkmate possibility other than two queens versus a king, since one cannot even set up a checkmate with other combinations of three pieces. But two white queens versus a black king, with white to play, is always at worst mate-in-$5$. In two moves, white should trap the black king between two files controlled by the queens, and then close the gap to allow the black king only two squares, after which there is a mating move. In figure \ref{Figure.TwoQueensMateKing}, for example, {\tt 1.Qg3+ Kf8 2.Qe2 Kf7 3.Qe9 Kf6 4.Qg4 Kf7 5.gQe6\#}.
\begin{figure}[h]
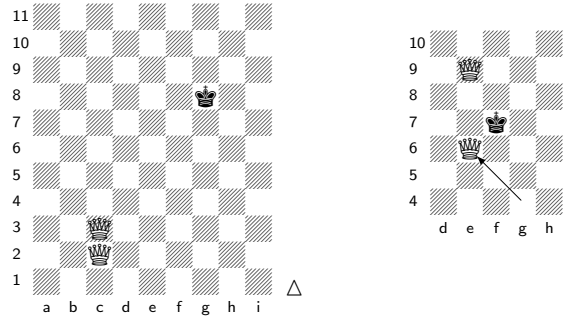

$$\chessboard[maxfield=i11,
            tinyboard,
            labelbottom=true,
            labelleft=true,
            labelfontsize=6pt, 
            labelleftwidth=1.6ex,
            mover=w,
            border=false,
            margin=true,
            setfen=%
/%
/%
/%
6k/%
/%
/%
/%
/%
2Q/%
2Q/%
/%
]%
\qquad\qquad
\raise30pt\hbox{\chessboard[maxfield=i11,
            tinyboard,
            printarea=d4-h10,
            labelbottom=true,
            labelleft=true,
            labelfontsize=6pt, 
            labelleftwidth=1.6ex,
            showmover=false,
            border=false,
            margin=true,
            pgfstyle=straightmove,
            shortenend=.7ex,
            linewidth=.05ex,
            markmove=g4-e6,
            setfen=%
/%
/%
4Q/%
/%
5k/%
4Q/%
/%
/%
/%
/%
/%
]%
}$$
\caption{White mates in $5$.}\label{Figure.TwoQueensMateKing}
\end{figure}
So with three pieces, if checkmate is possible at all, then it is at worst mate-in-$5$, and all smaller values are also realized.
\end{proof}
Consider next the position of figure \ref{Figure.ValueOmega}, which we claim has value $\omega$. Additional examples of finite positions with transfinite values are available in the answers provided by Noam D. Elkies and others to the MathOverflow question \cite{MO63423Wästlund:CheckmateInOmegaMoves}.
\begin{figure}[h]
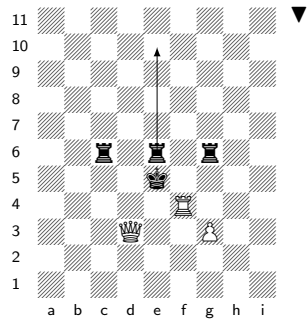

$$\chessboard[maxfield=i11,
            tinyboard,
            labelfontsize=6pt,
            labelleftwidth=2ex,
            labelbottom=true,
            labelleft=true,
            borderleft=false,
            borderright=false,
            bordertop=false,
            borderbottom=false,
            pgfstyle=straightmove,
            backmove={e6-e10},
            mover=b,
            setfen=%
/%
/%
/%
/%
/%
2r1r1r/%
4k/%
5R/%
3Q2P/%
/%
/%
]%
$$
\caption{A position with value $\omega$}\label{Figure.ValueOmega}
\end{figure}
The main line of play here calls for black to move his center rook up to arbitrary height, and then white slowly rolls the king into the rook for checkmate, similar to the pattern of play in the position of figure \ref{Figure.MateIn17}. For example, {\tt 1\ldots Re10 2.Rf5+ Ke6 3.Qd5+ Ke7 4.Rf7+ Ke8 5.Qd7+ Ke9 6.Rf9\#}. By playing the rook higher on the first move, black can force this main line of play have any desired finite length. Let us point out that any other initial move for black leads quickly to checkmate. For example, {\tt 1\ldots eRf6\ 2.Re4+ Kf5\ 3.Qd5\#}, or {\tt 1\ldots eRd6\ 2.Qf5\#}. Moving the {\tt c} rook anywhere leads to checkmate with {\tt 1\ldots cR\ 2.Rf5\#}, and similarly, moving the {\tt g} rook leads to checkmate with {\tt 1\ldots gR\ 2.Qd4\#}. So black is best off with the main line, if the goal is to delay checkmate. Finally, we argue that white has no quicker checkmate than the main line. Of course, once black moves the {\tt e} rook up, then white could skewer it with {\tt 1\ldots eRe$n$\ 2.Re4+ Kf6\ 3.Rxe$n$}, but by moving his rook off of the {\tt f}-file, he frees black's king to escape the checkmating net between the {\tt d} and {\tt f} files. In this line, black can sacrifice his rook for the {\tt g3} pawn and then simply aim to sacrifice all of his pieces, as Rook and Queen cannot alone checkmate the naked king. White might try other moves, but they would either permit the king to escape the {\tt d} and {\tt f} file checkmating net (for example, {\tt 2.Qe3+ Kd6} with the idea of {\tt Kc7}) or would simply waste time, permitting black to stop the roller with {\tt 2\ldots Rg5!}. Note that one may freely add extra black rooks to the position on the 6th rank, on the 8th rank and indeed on any even numbered rank, as indicated in figure \ref{Figure.ValueOmegaWithExtraRooks}, where the main line still works exactly as before. But for these positions, the alternatives to the main line seem even more unwise for white. Similarly, one may add extra rooks and a white king.
\begin{figure}[h]
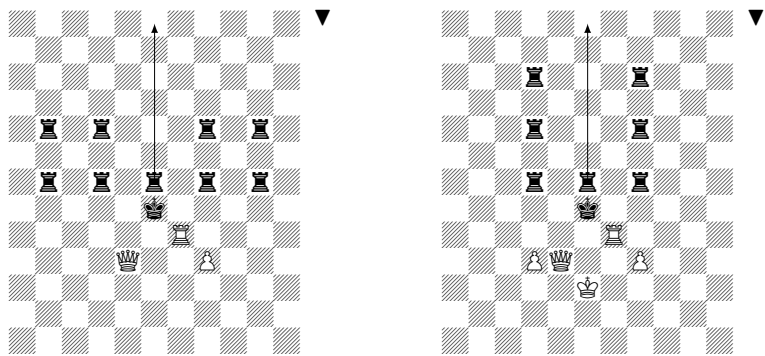

\hfil\chessboard[maxfield=k13,
            tinyboard,
            label=false,
            mover=b,
            border=false,
            margin=true,
            pgfstyle=straightmove,
            backmove={f7-f13},
            setfen=%
/%
/%
/%
/%
1r1r3r1r/%
/%
1r1r1r1r1r/%
5k/%
6R/%
4Q2P/%
/%
/%
/%
/%
]%
\hfil
\chessboard[maxfield=k13,
            tinyboard,
            label=false,
            mover=b,
            border=false,
            margin=true,
            pgfstyle=straightmove,
            backmove={f7-f13},
            setfen=%
/%
/%
3r3r/%
/%
3r3r/%
/%
3r1r1r/%
5k/%
6R/%
3PQ2P/%
5K/%
/%
/%
/%
]\hfil%
\caption{Positions with value $\omega$}\label{Figure.ValueOmegaWithExtraRooks}%
\end{figure}
The point of figure \ref{Figure.ValueOmegaWithExtraRooks} is that the main line of the position of figure \ref{Figure.ValueOmega} still works fine, since the new rooks are on the even-numbered ranks, while the white rook and queen roller checks occur on the odd-numbered ranks. But in these positions, the idea of white getting a quicker checkmate here by deviating from the main line is absurd.

Let us now introduce the omega one of chess, the theme of this article. Johan W\"astlund's question \cite{MO63423Wästlund:CheckmateInOmegaMoves} can be interpreted essentially as the question: how big is the omega one of chess $\omegaoneCh$?

\goodbreak
\begin{definition}\rm The ``{\df omega one of chess}'' refers either to the ordinal $\omegaoneCh$ or to $\omegaoneChi$, depending respectively on whether one is considering only finite positions or also positions with infinitely many pieces.
 \begin{enumerate}
  \item The ordinal $\omegaoneCh$ is the supremum of the game values of the winning finite positions for white in infinite chess.
  \item The ordinal $\omegaoneChi$ is the supremum of the game values of all the winning positions for white in infinite chess, including positions with infinitely many pieces.
  \item Similarly, $\omegaoneChthree$ and $\omegaoneChthreei$ are the analogous ordinals for infinite three-dimensional chess, as described in section \ref{Section.ThreeD}.
 \end{enumerate}
\end{definition}

There is actually an entire natural hierarchy of intermediate concepts between $\omegaoneCh$ and $\omegaoneChi$, corresponding to the various descriptive-set-theoretic complexities of the positions. For example, we may denote by $\omegaoneChc$ the {\df computable} omega one of chess, which is the supremum of the game values of the computable positions of infinite chess. Similarly, one may define $\omega_1^{\Ch,\text{hyp}}$ to be the supremum of the values of the hyperarithmetic positions and $\omega_1^{\Ch,\Delta^1_2}$ to be the supremum of the $\Delta^1_2$ definable positions, and so on.

Since there are only countably many finite positions, whose game values form an initial segment of the ordinals, it follows that $\omegaoneCh$ is a countable ordinal. The next theorem shows more, that $\omegaoneCh$ is bounded by the Church-Kleene ordinal $\omega_1^{ck}$, the supremum of the computable ordinals. Thus, the game value of any finite position in infinite chess with a game value is a computable ordinal.

\begin{theorem}\ \label{Theorem.UpperBounds}
 \begin{enumerate}
   \item $\omegaoneCh\leq\omegaoneChc\leq\omega_1^{\Ch,\text{\rm hyp}}\leq\omega_1^{ck}$. Thus, the game value of any winning finite position in infinite chess, as well as any winning computable position or winning hyperarithmetic position, is a computable ordinal. Furthermore, if a designated player has a winning strategy from a position $p$, then there is such a strategy with complexity hyperarithmetic in $p$.
   \item $\omegaoneChi\leq\omega_1$. The value of a winning position $p$ is a $p$-computable ordinal, and hence less than $\omega_1^{ck,p}$.
   \item Similarly, $\omegaoneChthree\leq\omega_1^{ck}$ and $\omegaoneChthreei\leq\omega_1$.
 \end{enumerate}
\end{theorem}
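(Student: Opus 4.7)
My plan is to treat infinite chess as an open game in the Gale--Stewart sense and invoke the standard bounds on the ranks of positive arithmetic inductive definitions. Fix a position $p$ and work on the countable space of positions reachable from $p$. The legal-move relation and the predicate ``black is checkmated'' are computable in $p$, so the operator $\Gamma$ that sends a set $X$ of positions to the union of the terminal positions (black checkmated), the white-to-move positions admitting some legal successor in $X$, and the black-to-move positions that have at least one legal move with all legal successors in $X$, is monotone and arithmetic in $p$. A routine induction matching the recursive clauses in the definition of game value shows that the least fixed point $W=\bigcup_\alpha \Gamma^\alpha(\emptyset)$ is precisely the set of positions from which white can force a win, and that the associated stage map $\sigma(q)=\min\{\alpha : q\in\Gamma^\alpha(\emptyset)\}$ satisfies $v(q)<\sigma(q)$ for every $q\in W$, the gap arising from the $+1$ in the inductive step of $\Gamma$ versus the plain supremum in the definition of $v$ at black-to-move nodes.

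Now I would invoke the Moschovakis--Spector theory of positive arithmetic inductions: the set $W$ is $\Pi^1_1(p)$, the closure ordinal of $\Gamma$ over $p$ is at most $\omega_1^{ck,p}$, and $\sigma$ is a $\Pi^1_1(p)$-norm on $W$. In particular, $\sigma(q)<\omega_1^{ck,p}$ for every $q\in W$, and hence $v(q)<\omega_1^{ck,p}$ for every winning position $q$. This establishes part (2): $\omegaoneChi\leq\omega_1$, with $v(p)$ a $p$-computable ordinal. Specializing to computable $p$ gives $\omegaoneChc\leq\omega_1^{ck}$; for hyperarithmetic $p$ one invokes Spector's theorem $\omega_1^{ck,h}=\omega_1^{ck}$ for hyperarithmetic $h$ to conclude $\omega_1^{\Ch,\text{hyp}}\leq\omega_1^{ck}$; and $\omegaoneCh\leq\omegaoneChc$ is trivial. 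Part (3) for infinite three-dimensional chess is the same argument verbatim, since legal moves and checkmate remain arithmetic in that setting.

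For the hyperarithmetic-in-$p$ winning strategy, I would appeal to $\Pi^1_1$-boundedness: since $\sigma$ is a $\Pi^1_1(p)$-norm on $W$ whose individual values are strictly bounded below $\omega_1^{ck,p}$, the restrictions $\sigma\restriction W$ and $v\restriction W$ are both $\Delta^1_1(p)$, i.e., hyperarithmetic in $p$. With $v$ hyperarithmetic in $p$, white's value-reducing strategy---at each turn from a valued position, pick any legal successor of strictly smaller value---is computable from $v$, hence hyperarithmetic in $p$, and is a winning strategy by the fundamental observation of game values already recalled in the paper.

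The main subtlety to watch is the countable branching arising on the infinite board from unobstructed long-range pieces: the supremum defining $v$ at a black-to-move node ranges over a countably infinite set of successor values and, a priori, could reach $\omega_1^{ck,p}$. This is exactly controlled by the inequality $v(q)<\sigma(q)$ together with Spector's strict bound on individual ranks in positive arithmetic inductions. The three-outcome structure of chess is harmless: draws and losses for white count simply as non-wins and leave the open-game framing intact.
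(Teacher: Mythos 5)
Your proposal is correct in substance but reaches the bounds by a genuinely different route from the paper. You package the value recursion as a positive arithmetical inductive operator $\Gamma$ and then quote the Moschovakis--Spector theory: the closure ordinal of a positive arithmetical induction relative to $p$ is at most $\omega_1^{ck,p}$, the least fixed point $W$ is $\Pi^1_1(p)$, and the stage comparison gives a $\Pi^1_1(p)$-norm, whence every winning position has value below $\omega_1^{ck,p}$; the hyperarithmetic case then follows from $\omega_1^{ck,h}=\omega_1^{ck}$ for hyperarithmetic $h$. The paper instead works with the admissible structure $L_{\omega_1^{ck}}$: it observes that ``$p$ has value $\alpha$'' is $\Sigma_1$ there, supposes toward a contradiction that some hyperarithmetic position has value exactly $\omega_1^{ck}$, and applies $\Sigma_1$-boundedness to the map sending each black move to its value. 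These are two faces of the same recursion-theoretic fact, but your version has the advantage of directly bounding \emph{every} value below $\omega_1^{ck,p}$ (the paper's contradiction argument officially only rules out the value $\omega_1^{ck}$ itself and needs a small extra minimality step to exclude larger values), while the paper's treatment of part (2) is more elementary than yours: it just notes that each position has only countably many legal successors, so suprema stay countable and $\omega_1$ is never attained. Your verification that $v(q)<\sigma(q)$ and your handling of stalemate (excluding black-to-move positions with no legal move from $\Gamma$) both match what is needed.

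One point of imprecision to repair: from the fact that $\sigma$ is a $\Pi^1_1(p)$-norm with all values below $\omega_1^{ck,p}$ it does \emph{not} follow that $\sigma\restriction W$ or $v\restriction W$ is $\Delta^1_1(p)$ in full --- $W$ itself is in general properly $\Pi^1_1(p)$, so its characteristic function, let alone the norm on all of it, need not be hyperarithmetic in $p$. What boundedness actually gives is that each initial segment $\{q: \sigma(q)\leq\alpha\}$ is $\Delta^1_1(p)$ uniformly for $\alpha<\omega_1^{ck,p}$; since every position arising in play from a fixed winning $p$ has stage at most $\sigma(p)$, the value function restricted to $\{q:\sigma(q)\leq\sigma(p)\}$ is $\Delta^1_1(p)$, and that is all the value-reducing strategy needs. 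This is exactly the role played in the paper's proof by the remark that all relevant values are bounded by the fixed value $\beta_p$ of $p$.
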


\begin{proof} Andreas Blass observed that $\omegaoneCh\leq\omega_1^{ck}$ in a MathOverflow answer \cite{MO63456Blass:CheckmateInOmegaMoves}---and Philip Welch independently made the same observation to the second author---but this also follows directly from Blass \cite{Blass1972:ComplexityOfWinningStrategies}. The point is that if there is any winning strategy for a designated player from a given position, then there is in a sense a canonical winning strategy, which is to make the closest move that minimizes the game value of the resulting position, and for a given winning finite position $p$, this strategy will have at worst hyperarithmetic complexity. To explain, let us review how the ordinal game values arise. Suppose without loss of generality that white is the designated player. Consider the game tree $T$ of all possible finite plays, that is, finite sequences of positions, each obtained by a legal move from the previous position. This is a computable tree, since we may easily check whether one position is transformed to another by a legal move. We recursively assign ordinal values to these positions when we come to see (during the course of the construction) that they are winning for white. Specifically, we identify the positions with value $\alpha$, by induction on $\alpha$. The positions with value $0$ are precisely the positions in which the game has just been completed with a win for white. If a position $p$ is black-to-play, but all legal black plays result in a position having a previously assigned ordinal value, then the value of position $p$ is decreed to be the supremum of these ordinals. If a position $p$ is white-to-play, and there is a white move to a position with value $\beta$, with $\beta$ minimal, then $p$ has value $\beta+1$. In this way, for any position $p$ with value, we may recursively associate to $p$ the value-reducing strategy $p\mapsto v(p)$, which calls for white to make a move realizing the minimum possible value (and among these choosing the resulting position with smallest possible \Godel\ code). Since playing according to the value-reducing strategy reduces value at every white move, it follows that the tree $T_p$ of all positions arising from the value-reducing strategy is well-founded, and the value of $p$ is precisely the rank of the tree $T_p$, if one should consider only the positions where it is black's turn to play.

Note that the assertion, ``position $p$ has value $\alpha$'' is $\Sigma_1$ expressible in any admissible structure containing $p$ and $\alpha$, since this is equivalent to the assertion that there is a ordinal assignment fulfilling the recursive definition of game value, which gives $p$ value $\alpha$. It follows that there can be no finite nor any computable nor indeed any hyperarithmetic position $p$ with value $\omega_1^{ck}$. To see this, suppose towards contradiction that some such position $p$ had value $\omega_1^{ck}$. Since this is a limit ordinal, it must be black's turn to play, and furthermore, every black play from $p$ to a position $q$ has an ordinal value $\beta_q$ below $\omega_1^{ck}$ and the supremum of these ordinals $\beta_q$ is $\omega_1^{ck}$. Consider the map $q\mapsto \beta_q$ in the admissible structure $L_{\omega_1^{ck}}$. Since this map is $\Sigma_1$-definable in $L_{\omega_1^{ck}}$, with a computable domain, it must be bounded in $\omega_1^{ck}$, a contradiction. So every position with a value has a value below $\omega_1^{ck}$, and so this value is therefore a computable ordinal.

If a player has a winning strategy from a position $p$, then because the ordinal game value assignment is unique and all relevant values in the game proceeding from $p$ will be bounded by the fixed value $\beta_p$ of $p$, it follows that the value-reducing strategy from $p$ is $\Delta^1_1(p)$ definable and hence hyperarithmetic in $p$.

For statements 2 and 3, the point is that from any position in infinite chess, there are only countably many legal moves for either player. Therefore, the value of any position with black to move is the supremum of a countable set of ordinals. So $\omega_1$ can never arise as such a value, and so the game values can never reach or exceed $\omega_1$. The more refined observation that the value of a position $p$ is a $p$-computable ordinal and hence at most $\omega_1^{ck,p}$ follows simply by relativizing the argument in the paragraphs above.
\end{proof}

\section{Infinite positions with transfinite game value}\label{Section.InfinitePositionsWithTransfiniteGameValue}

We should like next to present several positions of infinite chess with specific transfinite game values, thereby providing lower bounds for $\omegaoneChi$. As a warm-up, we begin with several infinite positions exhibiting what we call the basic door feature, where white aims to open a door, thereby releasing the mating material.
\begin{figure}[h]
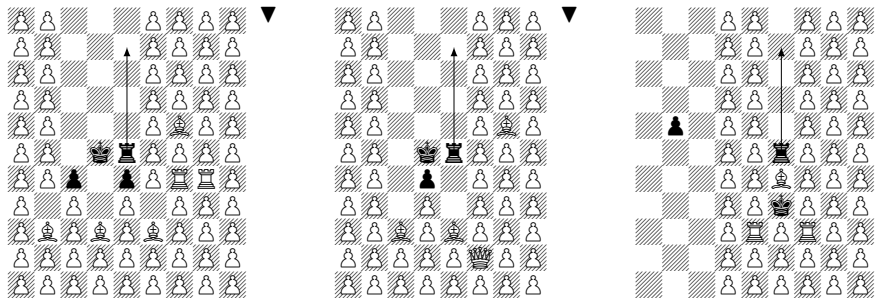

\chessboard[maxfield=i11,
           tinyboard,
            label=false,
            mover=b,
            border=false,
            margin=true,
            pgfstyle=straightmove,
            backmove={e6-e10},
            setfen=%
PP3PPPP/%
PP3PPPP/%
PP3PPPP/%
PP3PPPP/%
PP3PBPP/%
PP1krPPPP/%
PPp1pPRRP/%
P1P1P1PPP/%
PBPBPBPPP/%
PPPPPPPPP/%
PPPPPPPPP/%
]%
\hfil
\chessboard[maxfield=h11,
            tinyboard,
            label=false,
            mover=b,
            borderleft=false,
            borderright=false,
            bordertop=false,
            borderbottom=false,
            margin=true,
            pgfstyle=straightmove,
            backmove={e6-e10},
            setfen=%
PP3PPP/%
PP3PPP/%
PP3PPP/%
PP3PPP/%
PP3PBP/%
PP1krPPP/%
PP1p1PPP/%
PP1P1PPP/%
PPBPBPPP/%
PPPPPQPP/%
PPPPPPPP/%
]%
\hfil
\chessboard[maxfield=i11,
            tinyboard,
            label=false,
            mover=b,
            borderleft=false,
            borderright=false,
            bordertop=false,
            borderbottom=false,
            margin=true,
            pgfstyle=straightmove,
            backmove={f6-f10},
            setfen=%
3PP1PPP/%
3PP1PPP/%
3PP1PPP/%
3PP1PPP/%
1p1PP1PPP/%
3PPrPPP/%
3PPBPPP/%
3PPkPPP/%
3PRPRPP/%
3PPPPPP/%
3PPPPPP/%
]%
\caption{Several basic door positions with value $\omega$}
\label{Figure.SeveralPositionsValueOmega}
\end{figure}
Each of these positions, which involve infinitely many pieces (the pawn pattern is meant to continue beyond what is shown), has black to move with value $\omega$ for white. The central black rook, currently attacked by a pawn, may be moved up by black arbitrarily high, where it will be captured by a white pawn, which opens a hole in the pawn column. White may systematically advance pawns below this hole in order eventually to free up the pieces at the bottom that release the mating material. In particular, in the left position, the white pawn blocking the white rooks will eventually advance, allowing the rooks to come out and mate the black king. In the middle position, the white pawns will eventually allow the white bishop to get out of the way, which will release the white queen into the chamber to mate the black king. In the right position, the pawns will advance so as to allow one of the white rooks to mate the black king. In the right position, note the free black pawn, which prevents stalemate when the black rook is captured. Note also that in each of the positions, if black does not move his rook up on the first move, but instead captures a piece locally, then white is able to mate quickly. The value is $\omega$, since by moving his rook up to height $n$, it will take white at least $n$ moves to advance his pawns sufficiently to open the door for the mating material.

Each of the positions of figure \ref{Figure.SeveralPositionsValueOmega} is easily adapted to a position with value $\omega^2$, as in figure \ref{Figure.ValueOmegaSquared}.
\begin{figure}[h]
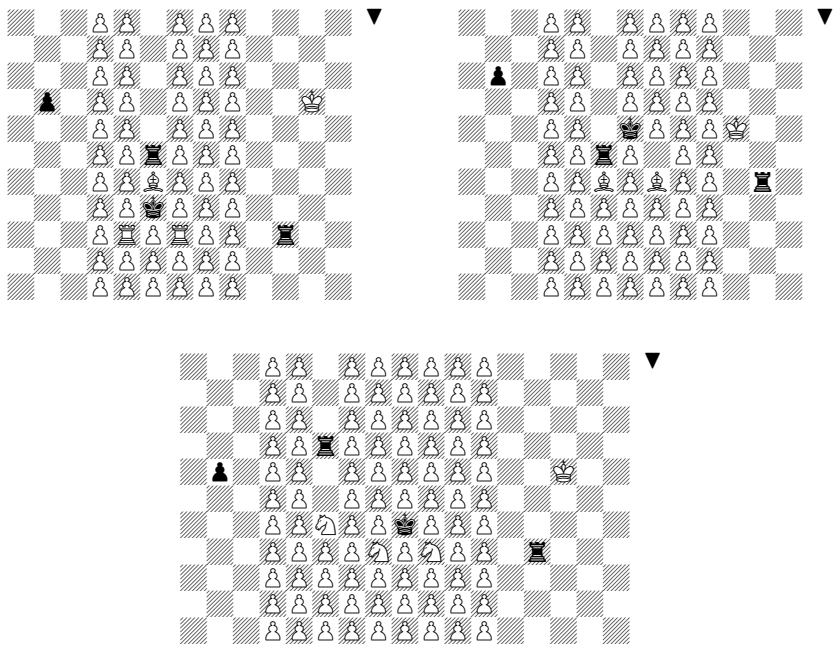

\hfil\chessboard[maxfield=m11,
            tinyboard,
            label=false,
            mover=b,
            borderleft=false,
            borderright=false,
            bordertop=false,
            borderbottom=false,
            margin=true,
            setfen=%
3PP1PPP/%
3PP1PPP/%
3PP1PPP/%
1p1PP1PPP2K/%
3PP1PPP/%
3PPrPPP/%
3PPBPPP/%
3PPkPPP/%
3PRPRPP1r/%
3PPPPPP/%
3PPPPPP/%
]%
\hfil
\chessboard[maxfield=m11,
            tinyboard,
            label=false,
            mover=b,
            borderleft=false,
            borderright=false,
            bordertop=false,
            borderbottom=false,
            margin=true,
            setfen=%
3PP1PPPP/%
3PP1PPPP/%
1p1PP1PPPP/%
3PP1PPPP/%
3PP1kPPPK/%
3PPrP1PP/%
3PPBPBPP1r/%
3PPPPPPP/%
3PPPPPPP/%
3PPPPPPP/%
3PPPPPPP/%
]\hfil%

\hfil\chessboard[maxfield=q11,
            tinyboard,
            label=false,
            mover=b,
            borderleft=false,
            borderright=false,
            bordertop=false,
            borderbottom=false,
            margin=true,
            setfen=%
3PP1PPPPPP/%
3PP1PPPPPP/%
3PP1PPPPPP/%
3PPrPPPPPP/%
1p1PP1PPPPPP2K/%
3PP1PPPPPP/%
3PPNPPkPPP/%
3PPPPNPNPP1r/%
3PPPPPPPPP/%
3PPPPPPPPP/%
3PPPPPPPPP/%
]\hfil%
\caption{Several positions with value $\omega^2$}%
\label{Figure.ValueOmegaSquared}
\end{figure}
The trick is simply to add a free black rook and the white king in a separate region, that does not interact with the door position. The idea then, is that every time white advances a pawn in the pawn column, black will proceed to harass the white king by systematic checking with the free black rook. White cannot allow himself to be in perpetual check, and so the only alternative is for white, when moving out of these harassing checks, to advance towards the free black rook. If the free black rook initially checks the white king at a distance of $k$, then white will move out of check but towards the black rook, now at distance $k-1$. Black can continue this harassing behavior for $k$ steps, but eventually the white king will be positioned to capture the free black rook, and so black now responds by moving an arbitrary distance away (the new value of $k$), giving white the opportunity to advance one white pawn in the pawn column, before the subsequent round of harassing checks begins.

To summarize, the main line of play is as follows:  black moves his inner rook up to height $n$. White aims to advance the pawn column $n$ times. But for each pawn movement, black has a chance to move his free rook out an arbitrary distance $k$, and making essentially $k$ many harassing checks on the white king. This pattern of play amounts to value $\omega^2$, since black gets to announce the value $n$, which is the number of times an additional announcement $k$ will be made, where $k$ is the number of moves that will occur before the next announcement. This pattern of play shows that the position has value at least $\omega^2$; the fact that any deviation from this pattern of play leads to a quicker checkmate for white means that the value is precisely $\omega^2$.

Consider the following somewhat sparser position, in figure \ref{Figure.ReleasingTheHordes}, with essentially the same features as the positions in figure \ref{Figure.ValueOmegaSquared}. The pawn column continues infinitely up and down, dividing the plane into two halves.
\begin{figure}[h]
\hfil\chessboard[maxfield=o13,
            tinyboard,
            label=false,
            mover=b,
            borderleft=false,
            borderright=false,
            bordertop=false,
            borderbottom=false,
            margin=true,
            pgfstyle=straightmove,
            backmove={f9-f13},
            emphstyle=\color{red!50!black},
            emphfield=f6,
            setfen=%
4P1P/%
4P1P/%
4P1P4r/%
4P1P/%
4PrP/%
Q3PBP3K/%
QQ2PPP/%
QQQ2P/%
QQ3P1k/%
Q4P/%
5P/%
5P/%
5P/%
]\hfil%
\caption{Releasing the hordes, black to move, value $\omega^2$}%
\label{Figure.ReleasingTheHordes}
\end{figure}
White aims to open the portcullis---the emphasized pawn below the central bishop---thereby releasing the queen horde into the right half-plane, which can proceed to deliver checkmate. The pattern of play here is that black will move his center rook upward an arbitrary height $n$, after which white aims to advance the pawn column up in order to free the central keystone bishop, which keeps the door closed until it is moved. Each white pawn advance gives black an opportunity to make another round of harassing checks on the white king in the right half plane. Note that black does not have mating material in the right half plane, but if the door should ever open, then the queens will easily overwhelm the black rook and make checkmate on the black king in the right chamber. The overall value is $\omega^2$, just as in the position in figure \ref{Figure.ValueOmegaSquared}. Note that for black to sacrifice his rook in the right chamber by capturing a pawn in the wall will simply open a hole in the wall earlier than necessary. Similarly, sacrificing his rook to the white king will simply omit his chance to initiate the harassing checks, thereby reducing the value to $\omega$. It is not possible to use the black rook and black king to make perpetual checks on the white king in the right chamber.

Let us now push this a bit harder, by having not just one black rook column, but an iterated series of black rook columns. The idea is to use what we call a locked-door-and-key arrangement, for which there is a black rook tower, but it becomes activated only after certain white attacking pieces are released. To illustrate the basic idea, consider the positions in figure \ref{Figure.BasicLockAndKey}.
\begin{figure}[h]
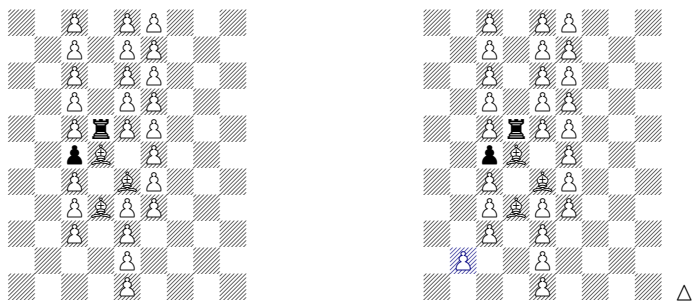

\hfil\chessboard[maxfield=i11,
            tinyboard,
            label=false,
            showmover=false,
            borderleft=false,
            borderright=false,
            bordertop=false,
            borderbottom=false,
            margin=true,
            setfen=%
2P1PP/%
2P1PP/%
2P1PP/%
2P1PP/%
2PrPP/%
2pB1P/%
2P1BP/%
2PBPP/%
2P1P/%
4P/%
4P/%
]%
\hfil
\chessboard[maxfield=i11,
            tinyboard,
            label=false,
            mover=w,
            borderleft=false,
            borderright=false,
            bordertop=false,
            borderbottom=false,
            margin=true,
            emphstyle=\color{blue!50!black},
            emphfield=b2,
            setfen=%
2P1PP/%
2P1PP/%
2P1PP/%
2P1PP/%
2PrPP/%
2pB1P/%
2P1BP/%
2PBPP/%
2P1P/%
1P2P/%
4P/%
]\hfil%
\caption{Locked door at left; locked door with key at right}%
\label{Figure.BasicLockAndKey}%
\end{figure}
The position at left shows the locked door position, which is stable in the sense that the black rook is not forced to move (assuming black can move elsewhere). The position at right shows the same position, but with the additional (emphasized) free white pawn, the ``key'' pawn, which white may advance to attack the black pawn, the ``lock,'' which will activate the tower, thereby ultimately unlocking the door. When the key white pawn attacks the black pawn, black will not capture, since this will free the bishop and open the door right away. Rather, black will let white capture the black pawn---inserting the key into the lock---which then attacks the black rook, which will then advance up to an arbitrary height, forcing white to advance the pawn column before the door opens.

By chaining together an iterated series of these locked doors, we produce positions with value $\omega^2\cdot k$ for any finite $k$. Consider the position in figure \ref{Figure.ValueOmegaSquareTimes4}, which chains together four locked doors and has value $\omega^2\cdot 4$.
\begin{figure}[h]
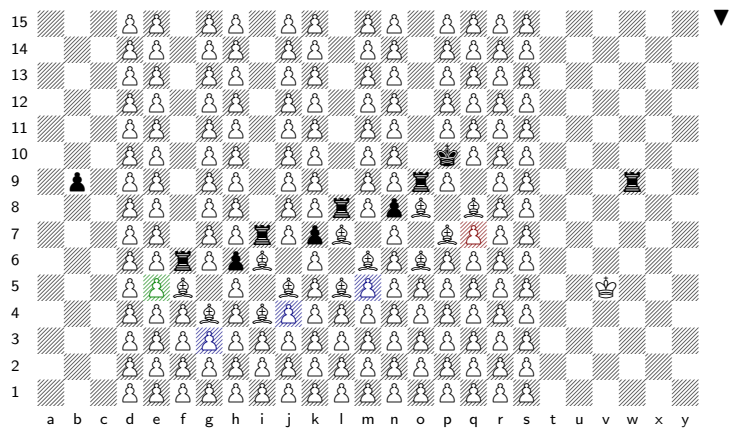

\hfil
\chessboard[maxfield=y15,
            tinyboard,
            labelbottom=true,
            labelleft=true,
            labelfontsize=6pt,
            labelleftwidth=2ex,
            mover=b,
            borderleft=false,
            borderright=false,
            bordertop=false,
            borderbottom=false,
            margin=true,
            emphstyle=\color{green!50!black},
            emphfield=e5,
            emphstyle=\color{blue!50!black},
            emphfields={g3,j4,m5},
            emphstyle=\color{red!50!black},
            emphfield=q7,
            setfen=%
3PP1PP1PP1PP1PPPP/%
3PP1PP1PP1PP1PPPP/%
3PP1PP1PP1PP1PPPP/%
3PP1PP1PP1PP1PPPP/%
3PP1PP1PP1PP1PPPP/%
3PP1PP1PP1PP1kPPP/%
1p1PP1PP1PP1PPrP1PP3r/%
3PP1PP1PPrPpB1BPP/%
3PP1PPrPpB1P1BPPP/%
3PPrPpB1P1BPBPPPP/%
3PPB1P1BPBPPPPPPP2K/%
3PPPBPBPPPPPPPPPP/%
3PPPPPPPPPPPPPPPP/%
3PPPPPPPPPPPPPPPP/%
3PPPPPPPPPPPPPPPP/%
]\hfil%
\caption{A position with value $\omega^2\cdot 4$}%
\label{Figure.ValueOmegaSquareTimes4}
\end{figure}
The main line of play here is that black moves up his first rook at {\tt f6}, currently attacked by the key pawn at {\tt e5}, to an arbitrary height $n$, his first large announcement. White takes the rook with a pawn, and aims to advance the corresponding pawn column successively so as to free up the {\tt f} and {\tt g} bishops below, which will release the next key white pawn at {\tt g3}, which will advance and attack the black lock pawn at {\tt h6}, which guards the second black rook tower. This will activate the second tower, and black moves the rook at {\tt i7} up to any desired height, the second large announcement, and so on for each of the four towers. Each key white pawn ultimately activates the corresponding black rook tower, with black replying by moving the rook up to arbitrary height, making four large announcements in all. When the white pawns of the final tower advance to release the bishops below, the final advancing key pawn at {\tt q7} advances to delivers checkmate to the black king. Note that with every single white pawn advance in any of the columns, black may make an arbitrarily long campaign of harassing checks on the white king in the free region at the right. In other words, the large announcements correspond to the number of white tower pawn advances that will need to be made by white, but each tower pawn advance enables black to make a small announcement, by moving his free black rook out to arbitrary distance and initiating a campaign of harassing checks. Thus, whenever possible, black checks the white king, unless his rook would be under attack by the king, in which case he moves it an arbitrary distance out (a small announcement), and for this brief reprieve from harassment, white is able to make one additional white pawn advancement before the next extremely long round of harassment begins. Any deviation from this pattern of play by black leads to a quicker checkmate, and white is unable to force on his or her own a quicker mate. The value is $\omega^2\cdot 4$ because black gets to make four large announcements, one for each black rook tower, which each provide the number of additional small announcements that will be made (one for each white pawn advance in that tower), for each of which he will be able to make at least that many moves (via harassing checks) before the next announcement. It is clear that one may use the same idea to construct positions with value $\omega^2\cdot k$ for any fixed $k$, simply by the continuing the pattern to have $k$ black rook towers.

We shall now push this idea a bit harder in order to produce a position with value $\omega^3$. The idea here is that we want to allow an arbitrary (but definitely finite) number of black rook towers to become activated.
\begin{figure}[h]
\hfil
\chessboard[maxfield=z29,
            boardfontsize=9pt,
            labelbottom=true,
            labelleft=true,
            labelfontsize=6pt,
            labelleftwidth=2ex,
            showmover=false,
            borderleft=false,
            borderright=false,
            bordertop=false,
            borderbottom=false,
            coloremphstyle=\color{green!25!black},
            emphfield=m6,
            coloremphstyle=\color{blue!50!black},
            emphfields={n12,q15,t18,w21,z24},
            coloremphstyle=\color{red!50!black},
            emphfield=k9,
            pgfstyle=straightmove,linewidth=.05ex,
            backmove=m6-p9,
            margin=false,
            setfen=%
6PPPPP1PP1PP1PP1PP1PB/%
6PPPPP1PP1PP1PP1PPrP1/%
6PPPPP1PP1PP1PP1P1BpB/%
6PPPPP1PP1PP1PP1PB1P1/%
6PPPPP1PP1PP1PPrP1BPB/%
6PPPPP1PP1PP1P1BpB1PP/%
6PPPPP1PP1PP1PB1P1BPB/%
6PPPPP1PP1PPrP1BPB1Pp/%
6PPPPP1PP1P1BpB1PPBpP/%
6PPPPP1PP1PB1P1BPBpP1/%
2K3PPPPP1PPrP1BPB1PpP2p/%
6PPPPP1P1BpB1PPBpP2pP/%
6PPPPP1PB1P1BPBpP2pP/%
6PPPPPrP1BPB1PpP2pP/%
6PPPP1BpB1PPBpP2pP/%
6PPPPB1P1BPBpP2pP/%
3r2PPPP1BPB1PpP2pP/%
6PPPkB1PPBpP2pP/%
6PP1P1BPBpP2pP/%
6PPBPB1PpP2pP/%
6PPPPPBpP2pP/%
6PPPPBpP2pP/%
6PPPP1P2pP/%
6PPPPBPbpP/%
6PPPPPP1p/%
6PPPPPP1P/%
6PPPPPP/%
6PPPPPP/%
6PPPPPP/%
]%
\chessboard[maxfield=m29,
            boardfontsize=9pt,
            label=false,
            showmover=true,
            mover=b,
            borderleft=false,
            borderright=false,
            bordertop=false,
            borderbottom=false,
            margin=false,
            coloremphstyle=\color{blue!50!black},
            emphfield=c27,
            setfen=%
1P1BPBpP2pP/%
BPB1PpP2pP/%
1PPBpP2pP/%
BPBpP2pP/%
1PpP2pP/%
BpP2pP/%
pP2pP/%
P2pP/%
2pP/%
1pP/%
pP/%
P/%
]%
\caption{A position with value $\omega^3$}%
\label{Figure.ValueOmegaCubed}
\end{figure}
In the first move, black in effect will choose to have access to $k$ black rook towers, and the resulting position will have value $\omega^2\cdot k$ as above. If black is able to choose arbitrarily large finite $k$, the overall value will therefore be $\omega^3$. Consider the position in figure \ref{Figure.ValueOmegaCubed}. The idea here is that the black bishop at {\tt m6}, currently under attack, goes up the diagonal, is captured, and this releases a white pawn, the initial key pawn, which will activate the black rook towers in sequence. Within each rook tower, the white key pawns (emphasized) move to attack the black lock pawn, guarding the towers; black moves his rook up an arbitrary amount, and white aims to advance the white pawn column so as to released the bishops below the tower, which will release the next white key pawn activating the next tower, the adjacent lower tower to the left (so the towers are activated from right to left now). As in the position of figure \ref{Figure.ValueOmegaSquareTimes4}, with each white pawn advance in the tower, black is able to initiate a campaign of harassing checks on the white king in the free region, now at the left. For example, {\tt 1\ldots mBu14 2.xu14} will lead, after a campaign of harrassing checks, to {\tt 3.u15}, which will eventually release the {\tt t} and {\tt u} bishops, thereby allowing the key pawn at {\tt t18}, which will attack the black lock pawn at {\tt s21}. The key white pawn will capture the black lock pawn, and black will move the rook at {\tt r22} up to any desired height, and this tower is active. White aims to advance the white pawns in the {\tt q} file, in order ultimately to release the {\tt q} and {\tt r} bishops below, which will release the key pawn at {\tt q15}, advancing to attack the lock pawn at {\tt p18} and activating the next tower. As before, with each white pawn advance, black embarks on an arbitrarily long campaign of harassing checks on the white king in the free region at the left, which must be completed before white can take another step in the main line. Eventually, when the last rook tower is activated, on the {\tt l} file, and the white pawns advance in the {\tt k} file, the {\tt k} and {\tt l} bishops are released, allowing at long last the key pawn at {\tt k9} to advance and force the black king into checkmate via {\tt $\<n>$.k11+ Kk13\ $\<n+1>$.j12\#}.

We claim that the main line of play we have just described shows that the position is winning for white, with value $\omega^3$. Black can move his bishop out past the $k^{\rm th}$ rook tower, for any finite value of $k$ he desires, and thereby play from a position in which there are $k$ rook towers, a position with value at least $\omega^2\cdot k$, giving value $\omega^3$ to the original position.

It remains for us to argue that the main line of play is essentially the only viable line of play, meaning that white cannot make a quicker checkmate than we have described, and black cannot realize higher values. So let us consider several issues.

First, note that if we were to remove the black bishop at {\tt m6}, then the position would become drawn, since white would have no way to activate any of the rook towers, and black would simply move his rook perpetually in the free part of the board. Similarly, in the given position, if the {\tt k} and {\tt l} files remain locked up, then there is clearly no checkmate possible, so white must aspire to activate the {\tt l} rook tower. Moving inductively to the right, if the {\tt n} and {\tt o} bishops are never released, then there is no hope of activating the {\tt l} tower, and so white must first activate the {\tt o} rook tower. And so on. So white must aspire to activate one of the towers, preferring it to be as close to the king as possible. For this reason, black has no interest in self-activating any of the towers, by moving a tower rook up even when it is not yet attacked, since this simply starts the tower process prematurely.

For the first move, note that if black does not move the bishop at {\tt m6}, then white could capture it with {\tt 1\ldots xm6 2.Bl5}, freeing the pawn at {\tt k5}, which could advance and attack {\tt l8}, which would quickly release the {\tt k} and {\tt l} bishops, enabling the key pawn at {\tt k9} to advance with checkmate. So on his first move black will want to move the black bishop. Note that capturing at {\tt l5} or {\tt l7} has a similar effect, and so instead black will move out on the main diagonal, following the main line. White is clearly best off capturing it, before it moves further out, which would increase value.

When a given black tower is activated, note that black has no interest in any movement except moving the black rook upward, since capturing down, left or right simply releases the bishops under the tower more quickly. So when a key white pawn is inserted into the lock, then black will indeed move his rook upward. Once a tower is activated in this way, then black still has no interest in self-activating any of the {\it earlier} towers, that is, to the left. Rather, he prefers to force white to undergo the long campaigns of harassing checks, in order to slowly advance his tower pawns before releasing the next key pawn.

But let us point out that there is an interesting plan that black can adopt, attempting to prolong the length of play. Namely, once a tower is activated, then black is also free to move rooks in the irrelevant towers further out, beyond the current active tower. Although any unprovoked move by a black rook in any of those towers can be immediately answered by white capturing it, nevertheless the interesting thing here is that white cannot actually afford always to answer such moves by black. The reason is that if white were to always answer such moves, then as there are infinitely many such towers available, black will simply continue to make such moves, with white answering, leading to a draw by infinite play. Consequently, white must therefore sometimes ignore such black moves, in order to make progress on the main line. Knowing this, perhaps what black hopes to do is to amass an army of black rooks just outside the active tower, which will then invade all together and cause chaos in the mating line. We argue that white needn't let things come near to that. What we claim is that white can plan to answer most of the irrelevant black rook movement, ignoring it only every tenth time, for example, and never ignoring rooks on adjacent towers. In the case that an irrelevant rook movement is ignored by white, then it is easy to see that white can capture it upon any subsequent movement, except if its second movement should bring it to one of the five emphasized squares indicated in figure \ref{Figure.IrrelevantTowers}.
\begin{figure}[h]
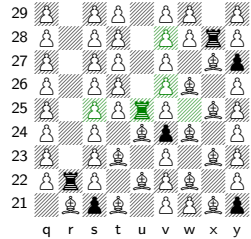

\hfil\chessboard[maxfield=z29,
            printarea=q21-y29,
            boardfontsize=9pt,
            labelbottom=true,
            labelleft=true,
            labelfontsize=6pt,
            labelleftwidth=2ex,
            showmover=false,
            borderleft=false,
            borderright=false,
            bordertop=false,
            borderbottom=false,
            coloremphstyle=\color{green!50!black},
            emphfields={s25,w25,v26,u25,v28},
            margin=false,
            setfen=%
6PPPPP1PP1PP1PP1PP1PB/%
6PPPPP1PP1PP1PP1PPrP1/%
6PPPPP1PP1PP1PP1P1BpB/%
6PPPPP1PP1PP1PP1PB1P1/%
6PPPPP1PP1PP1PPrP1BPB/%
6PPPPP1PP1PP1P1BpB1PP/%
6PPPPP1PP1PP1PB1P1BPB/%
6PPPPP1PP1PPrP1BPB1Pp/%
6PPPPP1PP1P1BpB1PPBpP/%
6PPPPP1PP1PB1P1BPBpP1/%
2K3PPPPP1PPrP1BPB1PpP2p/%
6PPPPP1P1BpB1PPBpP2pP/%
6PPPPP1PB1P1BPBpP2pP/%
6PPPPPrP1BPB1PpP2pP/%
6PPPP1BpB1PPBpP2pP/%
6PPPPB1P1BPBpP2pP/%
3r2PPPP1BPB1PpP2pP/%
6PPPkB1PPBpP2pP/%
6PP1P1BPBpP2pP/%
6PPBPB1PpP2pP/%
6PPPPPBpP2pP/%
6PPPPBpP2pP/%
6PPPP1P2pP/%
6PPPPBPbpP/%
6PPPPPP1p/%
6PPPPPP1P/%
6PPPPPP/%
6PPPPPP/%
6PPPPPP/%
]\hfil%
\caption{Irrelevant rook movement in distant towers}\label{Figure.IrrelevantTowers}
\end{figure}
But notice that in this case, white can ensure that the black rook can be captured upon any subsequent movement. For example, if on the ignored move black has captured either the pawn at {\tt t25} or {\tt v25}, and then subsequently moved to one of the emphasized squares, then white can move his bishop from {\tt u24} to the square where the captured pawn had been, thereby trapping the rook in the sense that any further movement will result in immediate capture. In this way, after having ignored an irrelevant rook movement, white can ensure that this rook would be captured upon its third movement, essentially without leaving that tower. It follows that black cannot hope in this way to bring his rooks from the distant towers into any useful action, and furthermore, we claim that black actually has no interest in pursuing any of this irrelevant rook activity, because it was ultimately answerable by white by causing at most a bounded number of extra moves, a bounded increase in value, whereas black had to give up the opportunity to undertake a campaign of harassing checks in order to move the rook in the first place, with value of $\omega$. Basically, whatever delay black hoped to cause by means of the irrelevant rook movements, he could have caused a greater delay by moving his rook out further in the free region of the board.

Finally, let us consider one of the consequences of the main line of play, the fact that activating a rook tower causes the release of a number of white bishops, which can find their way into the spare area of the board at the lower right. Indeed, by using his key pawn to activate not only one tower but also the (irrelevant) towers beyond that tower---and this is indeed possible---white can cause the release of any desired number of white bishops into the spare area in the lower right. We observe simply that this doesn't matter, since they are all white square bishops, and there is nothing for them to do in that area. White square white bishops in the spare area at the lower right simply have nothing to attack, except the black pawn at {\tt n5}, which is of no consequence (it is there merely to help control the very first move). In particular, having extra white square white bishops in the lower right will not enable white to activate any tower that is not yet activated, and so this artifact of the position is nothing to worry about.

So we have presented a position of infinite chess with game value $\omega^3$. Let us briefly describe how to modify the position of figure \ref{Figure.ValueOmegaCubed} to produce positions of value $\omega^3\cdot 2$ and more. The idea is to add another sequence of black rook towers in the upper left part of the board. With value $\omega^3$, that part of the position will end not with checkmate, but rather by releasing in a certain precise way the key bishop that activates the current position. That is, rather than starting with the bishop at {\tt m6} as in figure \ref{Figure.ValueOmegaCubed}, white has to undergo an $\omega^3$ game in order to force black into the position of needing to move his bishop. One can imagine a black pawn lock guarding the black bishop at {\tt m6}, while another $\omega^3$ position at the left will have the effect of releasing a key white pawn to open that lock, causing at that point the black bishop to move. Such a position would have overall game value $\omega^3\cdot 2$.

We should like also to observe that one may combine a version of the position of figure \ref{Figure.ValueOmegaCubed} in the upper-right half plane with a reversed-color version of the position in lower-left half plane, in order to produce a position having value $\omega^3$ for the {\it second} player, whoever that is decreed to be. One should modify the positions so that instead of making checkmate directly after the last rook tower, instead what happens is a door opens releasing the mating material into the harassment region, having a separate harassment region for each king (these may be confined to columns of finite width).

The positions in this section show that $\omega^3<\omegaoneChi$, and we have indicated how to show $\omega^3\cdot 2<\omegaoneChi$, but we conjecture that $\omegaoneChi$ is far larger than this. We believe that chess is sufficiently complex so as to realize the full range of game values.

\begin{conjecture}\label{Conjecture.OmegaOneCH=OmegaOne}
 The omega one of chess, both for finite positions and for arbitrary positions, is as large as it can be:
 $$\omegaoneCh=\omegaoneChc=\omega_1^{\Ch,\text{\rm hyp}}=\omega_1^{ck}\qquad\qquad\omegaoneChi=\omega_1.$$
\end{conjecture}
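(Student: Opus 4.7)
My plan is to attempt the conjecture by inductively assigning to each countable well-founded tree $T$ a position of infinite chess whose game value equals the ordinal rank of $T$, as is done in the three-dimensional case of theorem \ref{Theorem.3DAnyValueAtttained}. Since the upper bounds $\omegaoneChi\le\omega_1$ and $\omegaoneCh\le\omega_1^{ck}$ are already established by theorem \ref{Theorem.UpperBounds}, and every countable ordinal is the rank of some well-founded tree on $\omega$ (and every computable ordinal the rank of a computable such tree), it suffices to produce matching lower bounds. The induction requires a library of chess gadgets implementing the ordinal operations of successor, finite sum, and countable supremum. Successor and finite sum are essentially handled already by the mate-in-$n$ positions of observation \ref{Observation.MateInn} and by the concatenated locked-door-and-key architecture of figure \ref{Figure.ValueOmegaSquareTimes4}, together with a delay construction producing $\omega\cdot\alpha$ from $\alpha$, generalizing the $\omega\to\omega^2$ upgrade.

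For the boldface claim, the missing ingredient is a \emph{choice gadget} allowing black on a single designated move to select among countably many preassigned sub-positions $p_n$ having values $\alpha_n$ unbounded in a prescribed limit ordinal. One natural realization uses an infinite side-by-side stack of independent rook-tower channels, each preloaded to its assigned value $\alpha_n$, together with a distinguished black rook whose initial height $n$ serves as the key that activates exactly the $n$th channel while leaving the others inert (for instance, the rook's landing square also serving as the key pawn for channel $n$, while the other channels remain locked forever). Iterating this choice gadget together with the delay and concatenation gadgets would realize every countable ordinal as a game value, establishing $\omegaoneChi=\omega_1$.

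The lightface claim is substantially harder because the entire construction must be compressed into a single finite position. The natural approach is to design a \emph{universal} finite position whose initial play mechanically traverses the rank of a given computable well-founded relation. Black's opening moves would encode a Turing machine index $e$ for the relation, and subsequent play would use the infinite board as unbounded memory, with pawn channels serving as tape cells, pawn locks as counter markings, and bishop-and-pawn key mechanisms carrying out the basic primitive recursive operations needed to enumerate the predecessors of a node in the encoded relation. If this can be done so that the game value of the universal position is the supremum of the recursive ordinals, we would obtain $\omegaoneCh=\omega_1^{ck}$, which together with theorem \ref{Theorem.UpperBounds} collapses the chain of inequalities there.

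The principal obstacle is realizing the choice gadget within a finite position. In 2D chess a single piece on one move does have countably many available squares along any open line, so infinite branching from a single move is geometrically possible; but the prescribed sub-positions $p_n$ themselves cannot all be statically encoded in a fixed finite arrangement when their values $\alpha_n$ are unbounded. A finite position can present the $p_n$ only schematically, so the construction must be genuinely recursive and self-similar, reusing the same chess material to simulate different sub-positions depending on black's prior commitments. Designing such a self-referential architecture in planar chess, and then rigorously verifying that no deviation by either player alters the claimed value against the rich background of potential counterplay (as illustrated by the detailed analysis of irrelevant tower activity in figure \ref{Figure.IrrelevantTowers}), appears to demand mechanisms beyond those presented in section \ref{Section.InfinitePositionsWithTransfiniteGameValue}, and is in my view the reason the authors leave this as a conjecture rather than a theorem.
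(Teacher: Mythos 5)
You should note first that this statement is a \emph{conjecture}: the paper offers no proof of it, and the authors explicitly report that their own attempts to establish it have failed. Your proposal is accordingly a plan rather than a proof, and to your credit you identify exactly the obstruction the authors themselves cite. The decisive missing ingredient for the boldface claim $\omegaoneChi=\omega_1$ is what you call the choice gadget: a two-dimensional position in which black must, on a single move, commit to one of countably many sub-positions of unbounded prescribed values, with the gadget iterable transfinitely. The paper realizes precisely this gadget only in \emph{three} dimensions (the branching layers of figure \ref{Figure.3DBranchingLayer}, where the distance the black bishop travels along a free diagonal selects among infinitely many side channels), and states that attempts to compress it into two dimensions ``have faced serious chess geometry difficulties; it seems that infinite chess in two dimensions is cramped.'' Your proposed realization---an infinite stack of independent rook-tower channels keyed by the height of a single black rook---is asserted, not constructed or verified. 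Note that the $\omega^3$ position of figure \ref{Figure.ValueOmegaCubed} already implements a \emph{single} such infinite choice (the black bishop choosing how many towers to activate); the unresolved difficulty is iterating this so that each selected sub-position itself contains further infinite choices of unbounded value, all while keeping the mating material, the harassment region, and the released bishops from interfering---this is exactly where the planar geometry appears to break down, and your proposal does not overcome it.

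For the lightface claim $\omegaoneCh=\omega_1^{ck}$ your proposal is even further from a proof: a ``universal'' finite position whose play simulates an arbitrary computable well-founded relation would be a major construction, and you exhibit no mechanism for it beyond naming the desiderata. The paper's best finite lower bound is only $\omega<\omegaoneCh$ (figure \ref{Figure.ValueOmega}), and the authors report that even a finite position of value $\omega^2$ has so far eluded them. So your proposal correctly diagnoses why the statement remains open, and its architecture matches the route the authors themselves suggest (embedding well-founded trees, as carried out in theorem \ref{Theorem.3DAnyValueAtttained} for three dimensions), but it does not close the gap: the missing steps are precisely the gadget constructions you name and do not supply.
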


The first equation is equivalent merely to $\omegaoneCh=\omega_1^{ck}$, since the other values are trapped between. This part of the conjecture, if true, implies that there should be finite positions in infinite chess with extremely large ordinal game values. Evidence against this might be the fact that the best lower bounds currently known are comparatively small. For example, amongst finite positions the best we have observed in this article is that $\omega<\omegaoneCh$, by means of the position of figure \ref{Figure.ValueOmega}, and the highest values currently claimed in the answers to W\"astlund's question \cite{MO63423Wästlund:CheckmateInOmegaMoves} are $\omega\cdot n$. So although we believe that finite high-value positions exist, we don't yet have specific examples. We suggest that one way to make a finite position with value $\omega^2$ might be for white to have an inefficient mating net, perhaps with four bishops, which black can initially push arbitrarily far out; the idea would be that the inefficiency in carrying out the mate will give black free moves in order to carry on a checking campaign in the manner of the black rook on the side in the positions of figure \ref{Figure.ValueOmegaSquared}. Although we've considered several promising candidates, none so far has fully succeeded. Meanwhile, the infinite positions of section \ref{Section.InfinitePositionsWithTransfiniteGameValue} are all computable, and so at least we know $\omega^3<\omegaoneChc$.

Some evidence in favor of the conjecture is the fact that theorem \ref{Theorem.3DAnyValueAtttained} and corollary \ref{Corollary.OmegaOneCh3c} establish an analogue of it in the case of infinite three-dimensional chess, by embedding any well-founded infinitely branching tree in an infinite position of infinite three-dimensional chess. One could conceivably aspire to prove that $\omegaoneChi=\omega_1$ by finding similar embeddings of such trees in two-dimensions, but our attempts to carry this idea out have faced serious chess geometry difficulties; it seems that infinite chess in two dimensions is cramped.

\section{Computable strategies}\label{Section.ComputableStrategies}

In this section, we prove that for computable infinite positions in infinite chess, the question of whether the players are required to play according to a deterministic computable procedure can affect our judgement of whether a given computable position is a win or a draw.

\begin{theorem}\label{Theorem.ComputableTree}
 There is a position in infinite chess such that:
  \begin{enumerate}
   \item The position is computable.
   \item In the category of computable play, the position is a win for white. White has a computable strategy defeating every
     computable strategy for black.
   \item However, in the category of arbitrary play, the position is a draw. Neither player has a winning strategy, and either player can force a draw.
  \end{enumerate}
\end{theorem}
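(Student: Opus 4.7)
The plan is to build the position around an embedding of a computable infinite finitely-branching tree $T\subseteq\omega^{<\omega}$ that has no computable infinite branch; such trees exist by classical computability theory, for instance as the tree of paths through a computable $\Pi^0_1$ class containing no computable member. The embedding will arrange that, starting from a ``root room,'' black's successive moves correspond to choosing a child in $T$. The position evolves so that reaching a leaf of $T$ collapses into a checkmate for white, while sustaining an infinite branch keeps play going forever, which by the rules of infinite chess is a draw.

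To realize the embedding, I would use a ``tree of rooms'' built from mechanisms akin to the locked-door-and-key devices of Section \ref{Section.InfinitePositionsWithTransfiniteGameValue}. Each node $\sigma\in T$ is represented by a local chess configuration from which black must choose one of the finitely many corridors corresponding to children $\sigma{\frown}i$ present in $T$, with any deviation by either player locally answered by white with a quick mate. White's replies during the traversal are forced, being the natural responses that usher the play into the next room. Because $T$ is computable and finitely branching, the contents of any square on the board can be determined effectively from the finitely many node-gadgets overlapping that square, so the entire position is computable, and the ``follow black into the next room'' strategy for white is likewise computable.

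From these features, the three conclusions follow uniformly. Any computable strategy $\tau$ for black induces a computable branch through $T$ by reading off, against white's following strategy, which corridor $\tau$ chooses in each successive room; since $T$ has no computable infinite branch, this branch is finite, so $\tau$ eventually steers play into a leaf gadget and white checkmates. Conversely, $T$ has an infinite branch $f$ by König's lemma, and the (non-computable) black strategy that always selects the corridor dictated by $f$ produces unending play, hence a draw. White's following strategy never loses, only either winning or producing infinite play, so it simultaneously forces at least a draw for white; thus in the category of arbitrary play neither side has a winning strategy and either can force a draw.

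The principal obstacle is chess-geometric: realizing the tree embedding in two dimensions so that the doorways for distinct children do not interfere, that white's ``follow'' moves are genuinely forced, and that black has no escape into harassing checks, material piracy, or self-stalemating tricks that would subvert the intended main line. This is the sort of two-dimensional construction the paper elsewhere describes as ``cramped,'' so the bulk of the work lies in designing local gadgets that resist all deviations and that concatenate cleanly along the edges of $T$; once such gadgets are available, the recursion on $T$ that assembles them into a global position is straightforward.
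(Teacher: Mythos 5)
Your proposal follows essentially the same route as the paper: fix a computable infinite finitely-branching tree with no computable infinite branch, embed it as a system of channels that black's king is forced to traverse, so that any computable black strategy traces out a computable (hence finite) branch ending in a dead-end checkmate, while the branch guaranteed by K\"onig's lemma yields a non-computable drawing strategy, and white's following strategy secures at least a draw. The paper realizes the gadgets more simply than your locked-door-and-key plan---a binary tree of diagonal channels lined with mutually protecting bishops and rooks, with the white king trailing the black king so that zugzwang forces the climb---but the logical skeleton is identical.
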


\begin{proof}
Let $T\subset 2^{\ltomega}$ be any computable infinite tree having no computable infinite branch. Such trees are known to exist by elementary classical results of computability theory. Since the tree is infinite, it is an infinite finitely-branching tree, and hence by K\"onig's lemma it has an infinite branch, even if it has no infinite computable branch.

Consider the infinitary chess position resulting from making a copy of the tree $T$ in the following manner:
\begin{figure}[h]
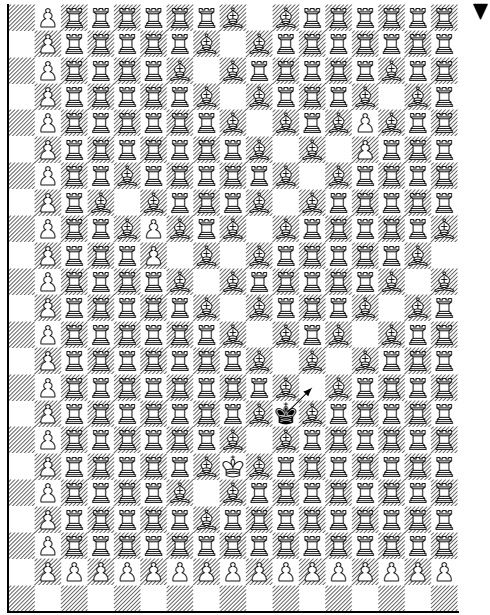

\hfil\chessboard[maxfield=q23,
            tinyboard,
            label=false,
            mover=b,
            borderleft=true,
            borderright=false,
            bordertop=false,
            borderbottom=true,
            pgfstyle=straightmove,
            backmove=k8-l9,
            setfen=%
1PRRRRRRB1BRRRRRR/%
1PRRRRRB1BRRRRRRR/%
1PRRRRB1BRRRRRBRR/%
1PRRRRRB1BRRRB1BR/%
1PRRRRRRB1BRBPBRR/%
1PRRRRRRRB1B1PRRR/%
1PRRBRRRRRB1BRRRR/%
1PRB1BRRRB1BRRRRR/%
1PRRBPBRB1BRRRRRB/%
1PRRRP1B1BRRRRRB1/%
1PRRRRB1BRRRRRB1B/%
1PRRRRRB1BRRRB1BR/%
1PRRRRRRB1BRB1BRR/%
1PRRRRRRRB1B1BRRR/%
1PRRRRRRRRB1BRRRR/%
1PRRRRRRRBkBRRRRR/%
1PRRRRRRB1BRRRRRR/%
1PRRRRRBKBRRRRRRR/%
1PRRRRB1BRRRRRRRR/%
1PRRRRRBRRRRRRRRR/%
1PRRRRRRRRRRRRRRR/%
1PPPPPPPPPPPPPPPP/%
/%
]\hfil
\caption{White forces black to climb the tree; black hopes to avoid the dead-end traps.}\label{Figure.TreeChannels}
\end{figure}
The position consists of a series of channels, branching points and dead-ends, arranged in the form of a tree, whose tree structure coincides with $T$. The channels consist of empty white squares on a diagonal, lined with bishops, which are in turn protected by rooks, which also protect each other. None of the bishops and rooks have any legal moves. Only the lower portion of the position is shown, of course, but all the necessary details of branching and turning corners and leaves (dead-ends) are indicated, and it is clear that we may arrange such a position so that the channels have the structure of any given binary tree, such as the tree $T$ that we fixed at the beginning of this proof. The position is computable, because the tree $T$ is computable, and the position may be built in a computable modular fashion from purely local information about which binary sequences are in $T$.

Note that the kings can move freely through the channel, and initially, the only moves for either player are with the king. The position has black to play, and there is only one move, which is for black to move his king up the channel. As long as black is moving through the channel, white can move only his king.

If the tree $T$ has an infinite branch, then it is clear that black has a drawing strategy: simply climb the tree along an infinite branch. This will avoid all the dead-ends; no captures need ever be made, and play will continue indefinitely. Hence, it leads to a draw by indefinite play. And since black clearly cannot check-mate white with the resources available, the position is drawn for both players.

The interesting thing to notice about this drawing strategy, however, is that it makes use of a non-computable resource, namely, the infinite branch through the tree, which by the choice of $T$ we ensured is necessarily non-computable. So let us consider the situation where black plays according to a deterministic computable procedure. White will play simply to follow the black king, eliminating the possibility of black retreating in the tree. Thus, essentially by the principle of zugzwang, white can force black to climb the tree. If black is playing according to a computable procedure, then since the tree has no computable infinite branch, black will inevitably find himself in one of the dead-end zones. Note with detail what happens when black enters such a zone: the black king climbs into the zone, followed by the white king close behind; the black king captures a pawn, followed by the white king again; the black king moves finally to the very last cell of the dead-end zone, and white pushes the pawn for check-mate.
\end{proof}

The previous position worked by the principle of zugzwang, using the white king to push the black king through the tree, relying on the fact that black has no other move and must therefore climb the tree. When combining this tree idea with other positions, however, it will be convenient
to have a version of it in which white can more forcefully push black through the tree. So consider the alternative positions of figure \ref{Figure.ForcingBlackToClimb}. The one on the left is obtained by filling the channel with pawns, and swapping the bishops for pawns.
\begin{figure}[h]
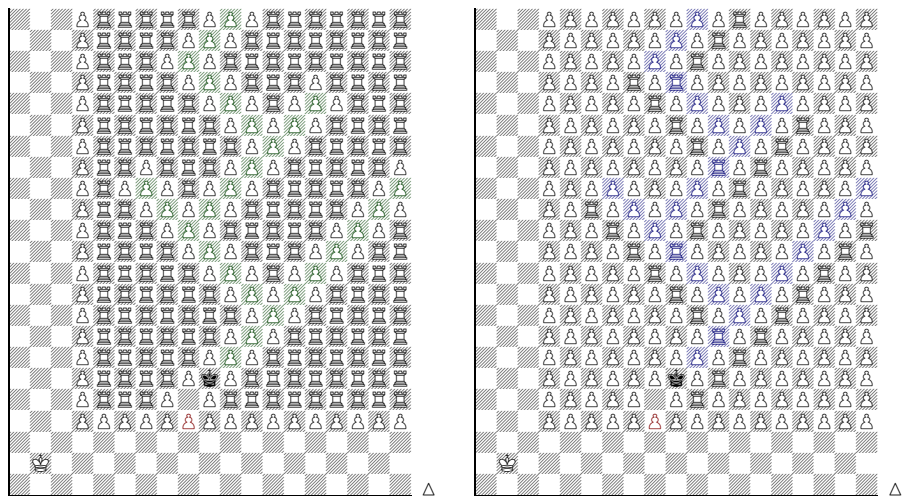

\hfil\chessboard[maxfield=s23,
            boardfontsize=8pt,
            label=false,
            mover=w,
            borderleft=true,
            borderright=false,
            bordertop=false,
            borderbottom=true,
            coloremphstyle=\color{green!25!black},
            emphfields={k7,l8,m9, n10, o11, p12, q13, r14, s15, l10, k11, j12, i13, h14, g15, j14, k15, l16, m17, n18, o19, l18, k19, j20, i21, j22, k23},
            coloremphstyle=\color{red!50!black},
            emphfield=i4,
            setfen=%
3PRRRRRPPPRRRRRRR/%
3PRRRRPPPRRRRRRRR/%
3PRRRPPPRRRRRRRRR/%
3PRRRRPPPRRRPRRRR/%
3PRRRRRPPPRPPPRRR/%
3PRRRRRRPPPPPRRRR/%
3PRRRRRRRPPPRRRRR/%
3PRRPRRRPPPRRRRRP/%
3PRPPPRPPPRRRRRPP/%
3PRRPPPPPRRRRRPPP/%
3PRRRPPPRRRRRPPPR/%
3PRRRRPPPRRRPPPRR/%
3PRRRRRPPPRPPPRRR/%
3PRRRRRRPPPPPRRRR/%
3PRRRRRRRPPPRRRRR/%
3PRRRRRRPPPRRRRRR/%
3PRRRRRPPPRRRRRRR/%
3PRRRRPkPRRRRRRRR/%
3PRRRP1PRRRRRRRRR/%
3PPPPPPPPPPPPPPPP/%
/%
1K/%
/%
]%
\hfil
\chessboard[maxfield=s23,
            boardfontsize=8pt,
            label=false,
            mover=w,
            borderleft=true,
            borderright=false,
            bordertop=false,
            borderbottom=true,
            coloremphstyle=\color{blue!45!black},
            emphfields={k7,l8,m9, n10, o11, p12, q13, r14, s15, l10, k11, j12, i13, h14, g15, j14, k15, l16, m17, n18, o19, l18, k19, j20, i21, j22, k23},
            coloremphstyle=\color{red!50!black},
            emphfield=i4,
            setfen=%
3PPPPPPPPPRPPPPPP/%
3PPPPPPPPRPPPPPPP/%
3PPPPPPPRPPPPPPPP/%
3PPPPRPRPPPPPPPPP/%
3PPPPPRPPPPPPPPPP/%
3PPPPPPRPPPPPRPPP/%
3PPPPPPPRPPPRPPPP/%
3PPPPPPPPRPRPPPPP/%
3PPPPPPPPPRPPPPPP/%
3PPRPPPPPRPPPPPPP/%
3PPPRPPPRPPPPPPPR/%
3PPPPRPRPPPPPPPRP/%
3PPPPPRPPPPPPPRPP/%
3PPPPPPRPPPPPRPPP/%
3PPPPPPPRPPPRPPPP/%
3PPPPPPPPRPRPPPPP/%
3PPPPPPPPPRPPPPPP/%
3PPPPPPkPRPPPPPPP/%
3PPPPP1PRPPPPPPPP/%
3PPPPPPPPPPPPPPPP/%
/%
1K/%
/%
]\hfil%
\caption{White forces black to climb the tree with successive pawn checks, two variations}\label{Figure.ForcingBlackToClimb}
\end{figure}
At the right, a somewhat cleaner version of the position with the same essential features is made using only rooks and pawns. The king can be forced by successive pawn checks to climb the tree of highlighted squares. These positions also prove the claims made in theorem \ref{Theorem.ComputableTree}, as well as the additional claims of theorem \ref{Theorem.RookPawnTree}. The difference here is that white pushes the black king through the tree by pushing a pawn at each step, giving check. The only legal move for black is to climb the tree, in which case the next pawn is available for white to push with check.

\break
\begin{theorem}\label{Theorem.RookPawnTree}
There is a position in infinite chess such that:
 \begin{enumerate}
   \item The position is computable.
   \item In the category of computable play, the position is a win for white. White has a computable strategy defeating every
     computable strategy for black.
   \item Furthermore, white has such a computable strategy in which every move is check, defeating every computable strategy for black.
   \item White has another such computable strategy, defeating every computable strategy of black, in which white simply moves his king back and forth until the last four moves forcing checkmate.
   \item Meanwhile, in the category of arbitrary play, the position is drawn. Either player may force a draw. White has a computable drawing strategy, which does not depend on the moves of black. Black has a drawing strategy of `low' Turing complexity, whose moves do not depend on the moves of white.
\end{enumerate}
\end{theorem}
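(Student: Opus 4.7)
The approach follows the pattern of Theorem \ref{Theorem.ComputableTree}, with the rook-and-pawn channel of Figure \ref{Figure.ForcingBlackToClimb} providing several alternative white strategies. I would fix a computable infinite tree $T\subseteq 2^{\ltomega}$ with no infinite computable branch, and build the channel position from $T$ exactly as in the proof of Theorem \ref{Theorem.ComputableTree}, verifying that the rook-pawn walls force monotone advance of the black king: once the king advances one step, the parent square becomes attacked (or is blocked by a freshly pushed pawn), so the only legal king moves are onward into the channel. Claim (1) is then immediate from the computability of $T$.

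For claim (3), white's strategy is to push on each turn the pawn immediately behind the black king, giving check. Since black has only a king, the unique legal reply is to advance into the next tree node, so the sequence of black king positions is a strictly ascending ray in $T$. Because the whole play is computable from black's strategy, this ray is computable, and by the choice of $T$ it cannot be infinite; hence black must enter a leaf, where white's next four pawn pushes deliver checkmate as in the proof of Theorem \ref{Theorem.ComputableTree}. For claim (4), white instead oscillates his king between two fixed squares, never approaching the tree. Because the geometry already compels the black king to advance on each legal move, the same argument applies: any computable strategy of black traces a computable ascending ray of $T$, which must terminate in a leaf; at that point white transitions to the same four-move mating sequence. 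Claim (2) is an immediate corollary of (3) or (4).

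For the drawing analysis in claim (5), note that black has only a king and hence no mating material whatsoever, so any strategy for white is automatically drawing; in particular white may obliviously oscillate his king, yielding a computable drawing strategy that does not consult black's moves. For black's drawing strategy, use K\"onig's lemma to obtain an infinite branch of $T$, and then apply the Jockusch--Soare low basis theorem to the nonempty $\Pi^0_1$ class of infinite branches of $T$ in $2^{\omega}$ to extract a branch $b$ of low Turing degree. Black's strategy is to walk his king along $b$, advancing to the node $b\restriction n$ on his $n^{\th}$ move; this prescription depends only on $b$ and on black's move count, and hence does not consult white's moves, so it is oblivious and of low Turing complexity.

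The main obstacle is the local chess verification that the piece layout in Figure \ref{Figure.ForcingBlackToClimb} really does force monotone advance of the black king, so that in particular retreats and capture-based digressions into the wall are illegal (not merely unwise) under the king-shuffling strategy of (4); one must check that any attempted capture of a wall pawn or rook either immediately opens a line of attack resulting in checkmate of black or else leaves the ascending-ray analysis undisturbed. These are case-by-case checks analogous to those carried out for the positions of Figures \ref{Figure.SeveralPositionsValueOmega} through \ref{Figure.ValueOmegaCubed}, and they should succeed by the same wall-and-zugzwang analysis used throughout the paper.
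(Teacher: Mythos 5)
Your overall architecture matches the paper's: both use the pawn-filled tree channels of figure \ref{Figure.ForcingBlackToClimb} built over a computable infinite tree with no computable infinite branch, the successive-pawn-check strategy for statement (3), king-shuffling for statement (4), and for statement (5) your appeal to the Jockusch--Soare low basis theorem applied to the $\Pi^0_1$ class of branches is exactly the intended source of the word `low' in the theorem (a detail the paper leaves implicit, so that part of your write-up is a genuine improvement). However, there is a real gap in your treatment of statement (4), located precisely at the point you flag as ``the main obstacle'': you propose to verify that under the king-shuffling strategy ``retreats \ldots are illegal (not merely unwise)'' and that ``the geometry already compels the black king to advance on each legal move.'' That verification cannot succeed, because the claim is false. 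The forced monotone advance in statement (3) is created by the checking pawns themselves: each pushed pawn lands on the channel square just vacated behind the king and is defended by the previously pushed pawn, and it is this that seals the channel behind him. If white pushes no pawns and merely shuffles his king, then after black's forced first move (capturing the undefended pawn on the next channel square) the square he came from is empty and unattacked, so he may legally step back into it. In particular the computable black strategy ``oscillate forever between the first two channel squares'' never enters a dead-end and never computes a branch of $T$, so your reduction of statement (4) to the nonexistence of a computable infinite branch collapses.

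The paper's own proof concedes exactly this point: the king-shuffling white strategy waits for black either to stumble into a dead-end \emph{or to move downwards in the tree after having climbed}, and the claim is that a downward move can be answered by a quick forced mate (this is what the ``last four moves'' in the statement refers to). So what actually has to be checked for statement (4) is not that retreats are impossible but that they are immediately fatal---a different and more delicate piece of chess analysis, since the channel squares the king has already visited have been stripped of their pawns and hence of the mutual pawn protection that powers the checking strategy of statement (3). Your proposal omits this case (indeed, it proposes to prove the wrong thing about it), so statement (4) is not established as written. Statements (1), (2), (3) and (5) are fine as you argue them and agree with the paper's treatment.
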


\begin{proof}
Consider either of the positions of figure \ref{Figure.ForcingBlackToClimb}, where the structure of the corresponding tree $T$ is an infinite computable tree with no computable infinite branches, as in theorem \ref{Theorem.ComputableTree}. These are computable positions, and by simply climbing the tree, black can avoid check-mate. Since there is no computable infinite branch through the tree, however, every computable strategy for black must eventually find itself in one of the dead-end nodes, where black will be check-mated. White may force black upward in the tree by means of successive pawn checks, which culminate in check-mate in a dead-end. Alternatively, white may simply move his king back and forth, waiting either for black to stumble into a dead-end, or to move downwards in the tree after having climbed. Every computable strategy for black must eventually do one of these things, and when this occurs, the reader may observe that white has quick a check-mate response.
\end{proof}

Consider now a symmetric version of the previous position, containing a copy of it as well as its dual.
\begin{theorem}\label{Theorem.ComputableTreeWinForFirst}
There is a board position in infinite chess with the following features:
 \begin{enumerate}
  \item The board position is computable.
  \item In the category of computable strategies, the position is a win for the first player (whether this should be white or black).
  \item In the category of all strategies, the position is a draw.
 \end{enumerate}
\end{theorem}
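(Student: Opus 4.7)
The plan is to combine the position of Theorem \ref{Theorem.RookPawnTree} with its color-dual into a single board position on the same infinite board. Let $T\subset 2^{\ltomega}$ be a computable infinite binary tree with no computable infinite branch, as in the proofs of Theorems \ref{Theorem.ComputableTree} and \ref{Theorem.RookPawnTree}, and let $P$ denote the tree-forcing position built from $T$ as in the right-hand diagram of figure \ref{Figure.ForcingBlackToClimb}. Let $P'$ denote the color-reversed, board-flipped version of $P$, so that in $P'$ it is black who, by successive pawn checks, forces the white king to climb a corresponding copy of $T$. I would place $P$ and $P'$ in widely separated regions of the infinite board, identifying the idle white king of $P$ with the active (about-to-be-pushed) white king of $P'$, and symmetrically for the black kings, so that the combined board position has exactly one king of each color. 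The separation between the two sub-positions can be chosen large enough that no piece of one region can interact with the other in any bounded number of moves.

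Computability of the combined board position is immediate from the computability of $T$ together with the uniform local rules by which $P$ and $P'$ are built. To see that in the category of computable play the first player wins, suppose first that white moves first. Then white plays the computable strategy of item (3) of Theorem \ref{Theorem.RookPawnTree} inside $P$, giving pawn check at every move. Because the black king is in check at every turn and the internal defense of the checking pawn in $P$ admits no legal reply for black other than climbing the tree, black is never afforded a move anywhere in $P'$; in particular black cannot initiate the dual checking cascade that would threaten white. Since $T$ has no computable infinite branch, any computable strategy for black must eventually steer the black king into a dead-end leaf, where white delivers checkmate. Symmetrically, if black moves first, black plays the dual computable all-checks strategy in $P'$ and defeats every computable strategy for white.

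For arbitrary play, I would observe that regardless of who moves first, the player whose king is being driven up the tree can follow a (necessarily non-computable) infinite branch in his own region, avoiding every dead-end forever and ignoring the other region entirely. Neither side has mating material outside the tree-forcing mechanism it controls, so indefinite play along such a branch yields a draw. If the first player declines to start the forcing, the second player may do so, and then the first player defends along an infinite branch in his half. In either case each player has a drawing strategy, so the position is drawn in the category of all strategies.

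The main obstacle I anticipate is the chess geometry required to merge $P$ and $P'$ into a single legal board position with only one king of each color, while still verifying that the defending king, at every step of its forced climb, truly has no escape other than climbing. In particular one must check that no piece imported from the opposite sub-position can interpose on a check, capture a checking pawn, or introduce an unintended stalemate or alternative legal reply. Since the two sub-positions can be placed arbitrarily far apart and the original configurations of figure \ref{Figure.ForcingBlackToClimb} already arrange the internal defense of each pushing pawn by adjacent pawns and rooks, I expect this to amount to routine local bookkeeping rather than a substantive new difficulty.
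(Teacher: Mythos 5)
Your proposal is correct and follows essentially the same route as the paper: combine the tree-forcing position with its color-reversed dual on opposite halves of the board (one king of each color), observe that the first mover's all-checks computable strategy denies the opponent any free move and defeats every computable defense, while in unrestricted play either king can climb a non-computable infinite branch to force a draw. The chess-geometry bookkeeping you flag is likewise treated as routine in the paper.
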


\begin{proof}
The idea of this proof is to combine the previous tree position on half of the board, with a copy of its dual, swapping the roles of white and black, on the other half, as in figure \ref{Figure.SymmetricTrees}.
\begin{figure}[h]
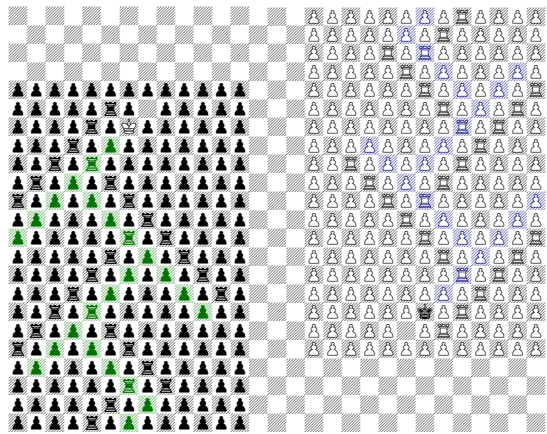

\hfil\chessboard[maxfield=n23,
            boardfontsize=7pt,
            label=false,
            showmover=false,
            borderleft=false,
            borderright=false,
            bordertop=false,
            borderbottom=false,
            marginright=false,
            coloremphstyle=\color{green!45!black},
            emphfields={f16,e15,d14,c13,b12,a11,e13,f12,g11,h10,i9,j8,k7,g9,f8,e7,d6,c5,b4,e5,f4,g3,h2,g1},
            setfen=%
/%
/%
/%
/%
ppppppppppppp/%
ppppprp1ppppp/%
pppprpKpppppp/%
ppprppppppppp/%
pprprpppppppp/%
prppprppppppp/%
rppppprpppppp/%
ppppppprppppp/%
pppppprprpppp/%
ppppprppprppp/%
pppprppppprpp/%
ppprppppppprp/%
pprprpppppppp/%
prppprppppppp/%
rppppprpppppp/%
ppppppprppppp/%
pppppprprpppp/%
ppppprppppppp/%
pppprpppppppp/%
]%
\chessboard[maxfield=o23,
            boardfontsize=7pt,
            label=false,
            showmover=false,
            borderleft=false,
            borderright=false,
            bordertop=false,
            borderbottom=false,
            marginleft=false,
            coloremphstyle=\color{blue!45!black},
            emphfields={j8,k9,l10, m11, n12, o13, p14, q15, r16, k11, j12, i13, h14, g15, f16, i15, j16, k17, l18, m19, n20, k19, j20, i21, h22, i23, j24},
            setfen=%
2PPPPPPPPRPPPPP/%
2PPPPPPPRPPPPPP/%
2PPPPRPRPPPPPPP/%
2PPPPPRPPPPPPPP/%
2PPPPPPRPPPPPRP/%
2PPPPPPPRPPPRPP/%
2PPPPPPPPRPRPPP/%
2PPPPPPPPPRPPPP/%
2PPRPPPPPRPPPPP/%
2PPPRPPPRPPPPPP/%
2PPPPRPRPPPPPPP/%
2PPPPPRPPPPPPPP/%
2PPPPPPRPPPPPRP/%
2PPPPPPPRPPPRPP/%
2PPPPPPPPRPRPPP/%
2PPPPPPPPPRPPPP/%
2PPPPPPkPRPPPPP/%
2PPPPP1PRPPPPPP/%
2PPPPPPPPPPPPPP/%
/%
/%
/%
/%
]\hfil%
\caption{The position is symmetric between white and black.
The first player can computably defeat all computable
opposing strategies}\label{Figure.SymmetricTrees}
\end{figure}
The point now is that both players have (non-computable, but low) strategies to climb with their kings through the corresponding tree, and thus in the category of all strategies, either player can force infinite play and hence a draw. But meanwhile, the first player to move, whether this is white or black, can initiate the pawn-checking strategy so as to force the other player through the tree. This is a computable strategy for the first player, which will defeat any computable strategy for the opponent, since any such strategy will inevitably lead the opposing king into a dead-end of the tree and hence into check-mate.
\end{proof}

The following variation on the theme reveals a more subtle possibility.

\begin{theorem}There is a board position in infinite chess with the following features:
 \begin{enumerate}
  \item The board position is computable.
  \item In the category of computable strategies, the position is undetermined: every computable strategy for one player is defeated by some computable strategy for the other player.
 \item In the category of all strategies, the position is a draw, and both players have drawing strategies of low complexity, which each also defeat any computable strategy for the opponent.
 \end{enumerate}
\end{theorem}

\begin{proof}
By \cite{JockuschSoare1971:AMinimalPairOfPi^0_1Classes}, we may fix two computable binary trees $S$ and $T$, such that no infinite branch through one tree computes a branch for the other. Now, we may design a chess position based on these trees, in a way similar to the position in figure \ref{Figure.SymmetricTrees}, in such a way that black may force white in effect to climb through a copy of $S$, at the same time that white forces black to climb through $T$. The positions should differ from that in figure \ref{Figure.SymmetricTrees}, however, in that the climbing process should not consist entirely of checking moves as it does there, but rather the pushing-up-the-tree process should be periodically interrupted by the need for an extra non-checking move; for example, one can imagine that at certain points, an extra move needs to be completed in order to place the piece that will allow the pushing-up-the-tree process to continue. The effect of such a change will be that immediately after such a non-checking move, control over play would temporarily transfer to the other player, who could begin another round of checking moves forcing their opponent up the other tree, until they reach the point in their own tree where an extra non-checking move is required. In this way, the players in effect take turns forcing their opposing kings to climb the respective trees, with alternating periods of control. Both players have again have drawing strategies of low complexity, since every computable tree has a low branch, and each player can adopt a strategy to climb the tree on a fixed infinite branch through that tree. By following such a strategy, they will never be checkmated, since they will never climb into a dead-end part of the tree. Meanwhile, suppose that white follows a computable strategy $\tau$. Let us imagine that black follows the strategy $\sigma_b$ of climbing the infinite branch $b$ through $T$. Since $b$ does not compute any branch through $S$, it follows that white cannot be following a branch in $T$, and so he will inevitably find himself in a dead-end part of the tree, where he will be checkmated. So $\sigma_b$ for black defeats any computable strategy for white. But further, notice that in order to achieve the checkmate against $\tau$, black climbed only partway up $b$, and so even though $b$ is not computable, there is a computable strategy $\sigma'$ that climbs up $b$ at least that far. And playing $\sigma'$ against $\tau$ will result in the same play of the game as playing $\sigma$ against $\tau$, at least for that length of play, which was long enough to achieve checkmate.\footnote{This is at bottom a general infinitary game-theoretic observation: if any strategy $\sigma$ defeats some strategy $\tau$ in an open game, then there is a computable strategy $\sigma'$ defeating $\tau$, namely, the strategy $\sigma'$ that plays just like $\sigma$ for the fixed finitely many steps required to beat $\tau$.} Thus, we have shown that for every computable strategy $\tau$ for white, there is a computable strategy $\sigma$ for black defeating it. And similarly with the colors reversed.
\end{proof}

We close this section with the observation that it is clear that all of our arguments relativize to oracles. For example, in the case of theorem
\ref{Theorem.ComputableTree}, the same method shows that for any Turing degree $d$, we may use an infinite binary branching tree $T$ that is $d$-computable, but which has no $d$-computable infinite branch, to construct a $d$-computable chess position that is a draw, but in which
white has a computable strategy to defeat any $d$-computable strategy for black. And similarly for the other theorems.

\section{The $\omega_1$ of infinite three-dimensional chess}\label{Section.ThreeD}

Chess in three dimensions has a surprisingly rich history, with numerous variants, some going back well over a century. Kieseritzky's Kubikschack (1851) was played on the $8\times 8\times 8$ board, a format used initially also with Maack's Raumschach (1907), but the Hamburg Raumschach club he founded, which ran from 1919 until World War II, eventually settled on the more manageable $5\times 5\times 5$ board. This game can be played by unpacking the five levels and laying the boards side-by-side, but purists play with no boards at all, holding the position purely in their minds. In several Star Trek episodes, Spock and Kirk play tri-dimesional chess, a futuristic variant with boards of different sizes on different levels, played now by many fans. In his encyclopedia of chess variants, Pritchard \cite{Pritchard2007TheClassifiedEncyclopediaOfChessVariants} describes dozens of variants of three-dimensional and higher-dimensional chess. The second author recalls playing $8\times 8\times 3$ chess as a child.

Here, we consider infinite three-dimensional chess, played in $\Z\times\Z\times\Z$ space, an endless orthogonal lattice of chess cells alternating in color in each of the principal coordinate axis directions. Let us be clear on the movement of the pieces, since there is room here for reasonable people to disagree. Rooks move freely in the three main orthogonal directions: forward/back, left/right, up/down. Bishops move freely on the same-colored diagonals; note that this does not include the long three-dimensional diagonal, which is called the unicorn move. Queens move like rooks or bishops (and so also cannot make the unicorn move). Kings may move in the same directions as a queen, but only one step. White pawns advance in the forward direction along a designated principal axis, with black pawns moving in the opposite direction on this axis. In particular, pawn movement (excepting captures) is one-dimensional, and opposite-color pawns can block each other from advancing.\footnote{This contrasts with the situation in Raumschach, where pawns have two principal directions of movement.} Pawns capture one step on their forward same-color diagonals, that is, to any of four cells. The positions relevant for our proof of theorem \ref{Theorem.3DAnyValueAtttained} will involve only kings, pawns and bishops, and we expect that the basic proof idea will adapt to any of the usual piece movement rules.

We aim in theorem \ref{Theorem.3DAnyValueAtttained} to settle the three-dimensional analogue of conjecture \ref{Conjecture.OmegaOneCH=OmegaOne}, by proving that the omega one of infinite three-dimensional chess $\omegaoneChthreei$ is true $\omega_1$, which by theorem \ref{Theorem.UpperBounds} is as large as it can be.

\begin{theorem}\label{Theorem.3DAnyValueAtttained}
 Every countable ordinal arises as the game value of a position in infinite three-dimensional chess. Hence, $$\omegaoneChthreei=\omega_1.$$
\end{theorem}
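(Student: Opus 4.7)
The plan is to adapt the tree-embedding idea of section \ref{Section.ComputableStrategies} to three dimensions, where the geometric cramping that blocks the two-dimensional version disappears. Recall that every countable ordinal $\alpha$ is the rank of some well-founded tree $T\subseteq\omega^{<\omega}$, and that a well-founded finitely-branching tree is finite by K\"onig's lemma, so that infinite ordinal rank forces genuinely infinite branching. Since theorem \ref{Theorem.UpperBounds} already gives $\omegaoneChthreei\leq\omega_1$, it will suffice to exhibit, for any well-founded $T\subseteq\omega^{<\omega}$ of rank $\alpha$, a position in infinite three-dimensional chess whose game value for white is $\alpha$.

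The central gadget is a three-dimensional \emph{branching chamber} associated to a node $t\in T$ with children $(t_n)_{n\in\omega}$. The chamber confines the black king to a small pocket from which there is one exit tunnel per child, the tunnel for child $n$ leading off into the branching chamber corresponding to $t_n$. Each tunnel is a one-cell-wide straight corridor lined with pawns braced by bishops that have no legal moves, exactly in the spirit of figure \ref{Figure.ForcingBlackToClimb} but lifted to $\Z^3$. Immediately behind the black king sits a file of white pawns that can be pushed one at a time with check, forcing the black king forward and hence into one of the exit tunnels; every other legal reply for black is to be engineered to expose the king to quick mate. At the leaves of $T$, the tunnel ends in a dead-end chamber where the driving pawn delivers mate, exactly as in the proof of theorem \ref{Theorem.ComputableTree}. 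The essential use of the third dimension is the routing of the infinitely many outgoing tunnels from a single chamber into pairwise disjoint slabs of $\Z^3$---for instance by sending the tunnel for child $n$ through the layer $z=f(n)$ for a sufficiently sparse $f$---which is simply impossible in $\Z^2$ with bounded-width channels.

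Assembling the whole position by recursion on $T$, gluing entry tunnels to matching exit tunnels, a straightforward induction on $\mathrm{rank}(T_t)$ shows that the game value for white at the chamber for $t$ equals $\mathrm{rank}(T_t)$, up to a bounded additive finite offset absorbed at limit stages: at the black-to-move branching chamber the only non-suicidal replies are the entries into the children's tunnels, and the supremum of the inductively-determined values on the children is precisely $\mathrm{rank}(T_t)$. Taking $T$ of rank $\alpha$ for each countable $\alpha$ yields a position of value $\alpha$, so every countable ordinal is realized.

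The main obstacle is the local chess engineering of the branching chamber: one must design a pocket whose set of legal productive moves for the black king is in natural bijection with $\omega$, verify that every other move is punished by quick checkmate, and confirm that when the gadgets are glued throughout $\Z^3$ there are no unintended long-range interactions between pieces of different gadgets. This last concern is genuine because rooks and bishops have unbounded range, but the third dimension gives the needed elbow room: separating tunnels into parallel slabs removes any line of sight between distant gadgets, and bishops in 3D are naturally confined to two-dimensional same-color diagonal sublattices within $\Z^3$, which actually helps keep their influence local. Once these local verifications are carried out, the global recursion and the value computation reduce to routine bookkeeping.
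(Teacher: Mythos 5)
Your overall strategy---realizing each countable ordinal as the rank of a well-founded tree $T\subseteq\omega^{<\omega}$, embedding $T$ into $\Z^3$ as a system of pawn-lined tunnels through which white drives the black king by successive checks, with checkmate at the leaves and an induction on rank to compute the value---is exactly the strategy of the paper, which formalizes the induction via a ``climbing-through-$T$'' game whose value equals $\mathrm{rank}(T)$. Routing the tunnels for different children into disjoint layers of $\Z^3$ is likewise the paper's device. (A small point: the paper claims only that the chess value is \emph{at least} the tree rank, which suffices since unboundedness in $\omega_1$ is what is needed; your claim of equality up to a finite additive offset is stronger and not obviously true, since simulating one tree move costs many chess moves, and this overhead compounds along a play.)

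The genuine gap is in the branching chamber, and it is precisely the difficulty the paper identifies as the crux of the whole construction. A king in $\Z^3$ has only $26$ neighbours, so a ``small pocket'' cannot have infinitely many exit tunnels each reachable in one move; the exits corresponding to the infinitely many children of a node must instead be strung along some unbounded corridor. But then, as white checks the king along that corridor, black can simply walk past every tunnel entrance forever, never committing to a child, and thereby force a draw by infinite play---so the naive gadget does not force a choice at all, and the position would be drawn rather than winning for white. Your proposal never explains how black is \emph{compelled} to select one of the infinitely many children while remaining \emph{free} to select any one of them. The paper's solution is a blocking black bishop sitting on the branching diagonal: white threatens mate unless black moves this bishop, black may slide it any finite distance up the diagonal, and that single move irrevocably bounds how far along the branching corridor the king can later travel, since he will be mated against his own bishop unless he enters one of the finitely many side channels below it. Some such commitment device is indispensable; without one, the induction step at infinite-branching nodes fails, and with only finitely-branching nodes a well-founded tree is finite and yields only finite values.
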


We will prove this theorem by embedding into infinite three-dimensional chess another much simpler game, whose game values nevertheless exhaust the countable ordinals. Specifically, to each nonempty tree $T$ on $\omega$, which means that $T\of\omega^\ltomega$ and any initial segment of an element of $T$ is also in $T$, we associate the {\df climbing-through-$T$} game, played as follows.
There are two players, black and white, although the game is completely controlled by black, who aims to climb through the tree $T$ while white watches. Black begins the game by
standing on the root node of $T$, and white responds by saying, ``OK, fine.'' At any stage of the game, black is standing on a node of $T$, and for his move, he selects any immediate successor of that node in $T$, to which white replies, ``OK, fine.'' At a branching node, black has a choice of nodes to move to, while at a leaf node---a terminal node of $T$---there is no successor node for black to choose and in such a case and only in such a case he loses the game. Thus, white wins if black should climb himself into a terminal node of the tree, and black wins only by climbing up an infinite branch through $T$. Since any white win in the climbing-through-$T$ game occurs at a finite stage, if it occurs at all, this game is an open game for white, and so the theory of ordinal game values applies.

It is clear that black can win the climbing-through-$T$ game if and only if there is an infinite branch through $T$, and so white wins the game if and only if the tree $T$ is well-founded. The classical theory of well-founded trees and indeed, of well-founded relations generally, assigns to each node in any well-founded relation an ordinal rank. Specifically, every node is recursively assigned the ordinal supremum of the ordinal successors of the ranks of the successor nodes to it in the tree: $\rank(p)=\sup\set{\rank(q)+1\mid p\to_T q}$, where $p\to_T q$ means that $q$ is an immediate successor of $p$. Thus, leaves have rank $0$; nodes all of whose successors are leaves have rank $1$; the rank of a branching node is the supremum of the ordinal successors of the ranks of the nodes to which it branches. In this article, we shall draw the trees growing upward, although the recursive rank definition proceeds from the leaves downward. In particular, the ordinal ranks of nodes strictly descends as one climbs the tree.
\begin{figure}[h]
\hfil\begin{tikzpicture}[xscale=.3,yscale=.3]
 \draw (-1,1) --(0,0) --(-1,2);
 \draw (-.5,3) --(0,0) --(.5,-2);
 \draw[dotted] (0,2) --(1,2);
 \node at (-5,0) {};
\end{tikzpicture}
\hfil
\begin{tikzpicture}[xscale=.2,yscale=.2]
 \draw (2,-4) --(0,0) --(-4,4) --(-5,5);
 \draw (-5,6) --(-4,4) --(-4.5,7);
 \draw[dotted] (-4,6) --(-3,6);
 \draw (0,0) --(-3,8) --(-4,9);
 \draw (-4,10) --(-3,8) --(-3.5,11);
 \draw[dotted] (-3,10) --(-2,10);
 \draw (0,0) --(-1,12) --(-2,13);
 \draw (-2,14) --(-1,12) --(-1.5,15);
 \draw[dotted] (-1,14) --(0,14);
 \draw (0,0) --(2,16) --(1,17);
 \draw (1,18) --(2,16) --(1.5,19);
 \draw[dotted] (2,18) --(3,18);
 \draw[dotted,thick] (3,12) --(4.9,12);
\end{tikzpicture}
\hfil
\caption{Well-founded trees, with ranks $\omega+2$ and $\omega\cdot 2+3$, respectively}\label{Figure.WellfoundedTrees}
\end{figure}
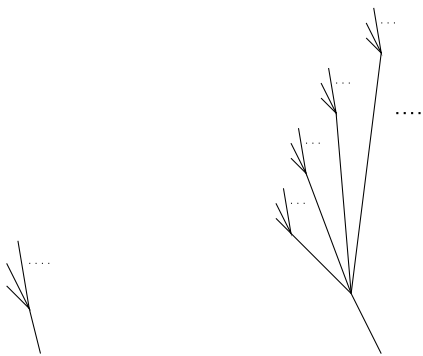
It is an elementary exercise to see that there are well-founded trees $T\of\omega^\ltomega$ of any desired countable ordinal rank. To see this, observe first that if we have a tree $T$ of rank $\alpha$, then the tree $1+T$ obtained by adding an additional step just before the root node of $T$ will have value $\alpha+1$. Second, if we have trees $T_n$ of value $\beta_n$, then the well-founded tree $\oplus T_n$ obtained by having a branching root node, whose successors are the root nodes of the various $T_n$---one may simply prepend the digit $n$ to every sequence in $T_n$ and take the union---will have value $\sup_n \beta_n+1$. It follows by transfinite induction that every countable ordinal is realized as the rank of a well-founded tree.

\begin{lemma}\label{Lemma.ValueOfClimbingThroughT}
 For any nonempty well-founded tree $T\of\omega^\ltomega$, the game value of the climbing-through-$T$ game is equal to the rank of $T$ as a well-founded tree.
\end{lemma}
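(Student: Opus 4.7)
The plan is to prove the lemma by transfinite induction on the ranks of nodes, establishing the following strengthened statement: for every $p\in T$, the game value of the position in which it is black's turn to climb from the node $p$ is equal to $\rank(p)$. Applied to the root of $T$, this gives the value of the whole climbing-through-$T$ game, namely the rank of $T$.

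Before running the induction, I would record a simple observation that handles white's trivial role. Whenever black has just moved to a node $q$ that is not itself a terminal leaf, it is white's turn and the only legal move is to say ``OK, fine,'' producing the position in which it is black's turn to play at $q$. By the white-to-move clause of the definition of game value, this means that the value of ``white to play at $q$'' is exactly one more than the value of ``black to play at $q$,'' whenever the latter is defined.

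For the base case, if $p$ is a leaf of $T$ then $\rank(p)=0$ and black has no legal move from $p$, so by the rules of the climbing game white wins immediately and the position ``black to play at $p$'' has value $0$, matching the rank. For the inductive step, let $p$ have immediate successors $q_0,q_1,\ldots$ in $T$, with ranks $\alpha_i=\rank(q_i)$, all strictly less than $\rank(p)$. By the induction hypothesis, each position ``black to play at $q_i$'' has value $\alpha_i$, and hence by the observation above, each position ``white to play at $q_i$'' has value $\alpha_i+1$. Since from ``black to play at $p$'' black's legal moves are exactly the moves to ``white to play at $q_i$,'' the black-to-move clause of the definition of game value gives
$$\text{value of ``black to play at }p\text{''}=\sup_i(\alpha_i+1),$$
which is exactly $\rank(p)$ by the recursive definition of rank.

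The main obstacle here is purely conceptual: the definition of game value treats successor moves by the winning side and limits by the losing side asymmetrically, while the climbing game treats its two players asymmetrically in a complementary way. The ``OK, fine'' reply by white is exactly what implements the $+1$ that converts the supremum in the rank recursion into a supremum of successor ordinals $\alpha_i+1$. Once this correspondence is made explicit, the two recursive definitions line up cleanly and the transfinite induction goes through with no further combinatorial work.
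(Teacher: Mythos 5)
Your proof is correct and is essentially the paper's own argument, which simply observes that the two ordinal assignments are defined by the same recursion; you have merely spelled out the induction, with white's forced ``OK, fine'' reply supplying the $+1$ that turns the game-value supremum into the supremum of successor ordinals appearing in the rank recursion. No gap, and no meaningful difference in approach.
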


\begin{proof}
The two ordinal values are defined by the same recursion. The value of a position $p$ in the climbing-through-$T$ game is the same as the rank of that node in $T$.
\end{proof}

In order to prove theorem \ref{Theorem.3DAnyValueAtttained}, we shall prove that for every tree $T$, there is an embedded copy of the climbing-through-$T$ game in infinite three-dimensional chess. What we mean is that there is a position in infinite three-dimensional chess such that the plays of black in this position correspond to moves in the climbing-through-$T$ game. What we will do is to embed an actual copy of the tree $T$, in such a way that white can force black to climb with his king through this copy of $T$, becoming checkmated only when he comes to what corresponds to a terminal node of the tree. We shall argue that the game value of this embedded game is at least as large as the game value of the climbing-through-$T$ game, and therefore any countable ordinal arises as a game value in infinite three-dimensional chess.

We already saw in two dimensions how to implement tree-like structures in infinite chess, for white was already forcing black up the tree in the positions of figures \ref{Figure.TreeChannels}, \ref{Figure.ForcingBlackToClimb} and \ref{Figure.SymmetricTrees}. The crucial difference there, however, was that those trees were merely 2-branching, since the branching nodes had only two successor nodes. But since any finitely-branching well-founded tree is finite, such trees will have finite rank and therefore will not help us with conjecture \ref{Conjecture.OmegaOneCH=OmegaOne}. In order to achieve high countable ordinal ranks, we must consider trees that have branching nodes with infinitely many immediate successors. The main difficulty with implementing such a branching node in infinite chess is that we must design the position so that white can force black to make a definite choice, without allowing black to cause a draw just by extending indefinitely the play implementing the choice itself. That is, it won't be enough to have a tree-like structure where black is able to choose any of the infinitely many branching nodes, if black is also able in effect by delay not to make any choice at all, since then black will choose to avoid making the choice and thereby cause a draw. We have already seen this feature in the $\omega^3$ position of figure \ref{Figure.ValueOmegaCubed}, where we gave black the opportunity to activate any of the black rook towers, but he definitely had to choose one of them, by moving his black bishop up the diagonal as far as he liked. We shall implement a similar construction here. The complication is that the tree $T$ will have infinitely many such branching nodes, and implementing a single branching node already takes up a whole diagonal. In two dimensions, there simply doesn't appear to be enough room to embed well-founded trees with appreciable rank in this way. So although we had originally wanted to use this well-founded tree idea in ``normal'' (that is, two-dimensional) infinite chess, we found ourselves up against the chess geometry difficulty that the idea simply didn't seem to fit in two dimensions. Meanwhile, the idea does work in three dimensions, as we shall presently explain, and so we are happy to settle the question for infinite three-dimensional chess.

The main idea is to give each branching node of the well-founded tree $T$ its own layer in the three-dimensional chess playing area. Each such branching layer will involve an infinite diagonal channel, as in figure \ref{Figure.3DBranchingLayer}, with as many side channels branching off of it as the node as successors in $T$, infinitely many if desired. The details of the position will enable white to force black to choose one of these side channels or be checkmated, thereby simulating the choice black faces in the climbing-through-$T$ game. Ultimately, the position will resemble a series of tubes in space, as pictured abstractly in figure \ref{Figure.EmbeddedTree}, and the main line of play will call for white to force black through these tubes, whose connective structure is abstractly the same as in the tree $T$, except that each branching node of $T$ is simulated on a separate layer of the position.
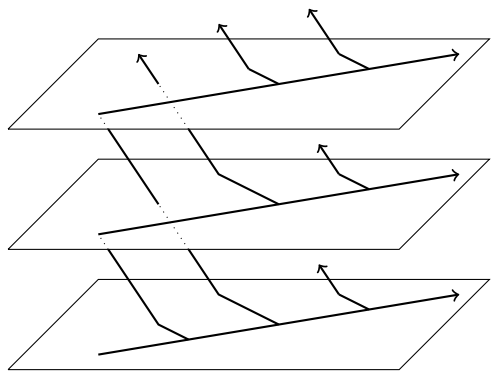
\begin{figure}[h]
\hfil\begin{tikzpicture}[xscale=.4,yscale=.2]
 \draw (0,0) -- (13,0) -- (16,6) -- (3,6) -- (0,0);
 \draw (0,8) -- (13,8) -- (16,14) -- (3,14) -- (0,8);
 \draw (0,16) -- (13,16) -- (16,22) -- (3,22) -- (0,16);
 \draw[->,thick] (3,1) -- (15,5);
 \draw[->,thick] (3,9) -- (15,13);
 \draw[->,thick] (3,17) -- (15,21);
 \draw[thick] (6,2) -- (5,3) -- (3.33,8);
 \draw[dotted] (3.33,8) -- (3,9);
 \draw[thick] (9,3) -- (7,5) -- (6,8);
 \draw[dotted] (6,8) -- (5,11);
 \draw[thick] (5,11) -- (3.33,16);
 \draw[dotted] (3.33,16) -- (3,17);
 \draw[thick] (12,4) -- (11,5);
 \draw[->,thick] (11,5) -- (10.33,7);
 \draw[thick] (9,11) -- (7,13) -- (6,16);
 \draw[dotted] (6,16) -- (5,19);
 \draw[->,thick] (5,19) -- (4.33,21);
 \draw[thick] (12,12) -- (11,13);
 \draw[->,thick] (11,13) -- (10.33,15);
 \draw[thick] (9,19) -- (8,20);
 \draw[->,thick] (8,20) -- (7,23);
 \draw[thick] (12,20) -- (11,21);
 \draw[->,thick] (11,21) -- (10, 24);
\end{tikzpicture}
\hfil
\caption{The embedded tree, with the infinite-branching nodes of $T$ simulated individually on separate layers}\label{Figure.EmbeddedTree}
\end{figure}
Specifically, if the black king should enter this branching layer, he will find himself at the start of an infinite diagonal as in figure \ref{Figure.3DBranchingLayer} with an obstructing black bishop. Black will have one spare move here to move the black bishop out of the way or be checkmated, and he can move it as far as he desires up the diagonal, which will enable access to any of the side channels branching off before that point. The idea, then, is that white will proceed to check the black king, forcing black to climb the channel and choose a branching path, or else be checkmated against the obstructing bishop. Once black selects one of the branching channels, white will continue checking him into this channel. Each such off-shoot channel either leads to a dead-end (corresponding to a leaf node in $T$), in which the black king is check-mated by the pawn as he lands at the end of the channel, or else leads to a ``staircase'' channel taking the black king onto another layer higher up, as shown in figure \ref{Figure.3dTransitionToStairwell}. Thus, upon entering a branching layer corresponding to a node in $T$, white forces black to choose one of the side channels corresponding to one of the successor nodes in $T$.

In the main position, black will have a mate-in-two position against white, with nothing white can do about it nor any need for white to reply, in a protected part of the board that does not interact with the embedded tree position we are about to describe. Meanwhile, in the main line of play, every white move will either check the black king or threaten mate-in-one, and consequently, black will never be at liberty to make even the first of his mate-in-two moves. So although the main line of play will be very long, in a way controlled by black so as to witness very high transfinite game values, nevertheless both players will in a sense be very close to checkmate during the entire play. This tense nature of the game will make it easier for us to argue that play must follow the main line, for neither player is at liberty for any spare moves. For example, although the position is somewhat open for white, in the sense that white could make many moves that tend to destroy the tree structure in various complicated ways, the fact is that none of these moves will check black or threaten forced-mate-with-checks (unless one has already reached a terminal part of the tree), and so white simply doesn't have time for them. Any move by white that doesn't check black or threaten forced-mate-with-checks will be answered by progress in black's mate-in-two region of the board, allowing black to win on the next move.

\begin{figure}[h]
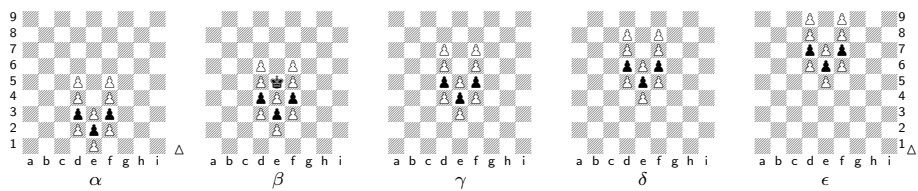
\hskip0pt
\hfil\vtop{\hbox{\chessboard[maxfield=i9,
            boardfontsize=6pt,
            label=false,
            labelleft=true,
            labelbottom=true,
            labelfontsize=4pt,
            labelleftwidth=1.6ex,
            border=false,
            margin=false,
            arrow=to,
            backmove={e3-e4},
            setfen=%
/%
/%
/%
/%
3P1P/%
3P1P/%
3pPp/%
3PpP/%
4P/%
]}\hbox{\hskip25pt$\scriptstyle\alpha$}}%
\hfil
\vtop{\hbox{\chessboard[maxfield=i10,
            printarea = a2-i10,
            boardfontsize=6pt,
            label=false,
            labelbottom=true,
            labelfontsize=4pt,
            labelleftwidth=1.6ex,
            showmover=false,
            border=false,
            margin=false,
            setfen=%
/%
/%
/%
3P1P/%
3PkP/%
3pPp/%
3PpP/%
4P/%
]}\hbox{\hskip25pt$\scriptstyle\beta$}}%
\hfil
\vtop{\hbox{\chessboard[maxfield=i9,
            boardfontsize=6pt,
            label=false,
            labelbottom=true,
            labelfontsize=4pt,
            labelleftwidth=1.6ex,
            showmover=false,
            border=false,
            margin=false,
            setfen=%
/%
/%
3P1P/%
3P1P/%
3pPp/%
3PpP/%
4P/%
/%
]}\hbox{\hskip25pt$\scriptstyle\gamma$}}%
\hfil
\vtop{\hbox{\chessboard[maxfield=i10,
            printarea = a2-i10,
            boardfontsize=6pt,
            label=false,
            labelbottom=true,
            labelfontsize=4pt,
            labelleftwidth=1.6ex,
            showmover=false,
            border=false,
            margin=false,
            setfen=%
/%
3P1P/%
3P1P/%
3pPp/%
3PpP/%
4P/%
/%
/%
]}\hbox{\hskip25pt$\scriptstyle\delta$}}%
\hfil
\vtop{\hbox{\chessboard[maxfield=i9,
            boardfontsize=6pt,
            label=false,
            labelright=true,
            labelbottom=true,
            labelfontsize=4pt,
            labelleftwidth=1.6ex,
            mover=w,
            border=false,
            margin=false,
            setfen=%
3P1P/%
3P1P/%
3pPp/%
3PpP/%
4P/%
/%
/%
/%
]}\hbox{\hskip25pt$\scriptstyle\epsilon$}}\hfil%
\caption{The black king is forced to ascend the stairs via {\tt 1.$\alpha$e4+ K$\gamma$e6 2.$\beta$e5+ K$\delta$e7 3.$\gamma$e6+ K$\epsilon$e8 4.$\delta$e7+}.}\label{Figure.3dStairway}
\end{figure}
As a warm-up to our position and three-dimensional chess generally, consider the basic stairway position, shown in figure \ref{Figure.3dStairway}, in which white forces the black king to ascend. Imagine the layers stacked atop each other, with $\alpha$ at the bottom and further layers below and above. The black king had entered at {\tt $\alpha$e4}, was checked from below and has just moved to {\tt $\beta$e5}. Pushing a pawn with check, white continues with {\tt 1.$\alpha$e4+ K$\gamma$e6 2.$\beta$e5+ K$\delta$e7 3.$\gamma$e6+ K$\epsilon$e8 4.$\delta$e7+}, forcing black to climb the stairs (the pawn advance {\tt 1.$\alpha$e4+} was protected by a corresponding pawn below, since black had just been checked at {\tt $\alpha$e4}). We encourage the reader to verify as an exercise that each black move here is forced (for example, {\tt $\beta$f6} is protected by the pawn at {\tt $\alpha$f5}). Furthermore, white has no advantage to moving any of the white pawns early, as this merely opens a gap in the stairwell protection, allowing black to escape, or creates a refuge where black can rest without fear of check, during which time he would implement his mate-in-$2$ position against white elsewhere.

This stairway pattern can continue indefinitely, but in our position the stairways lead eventually to what we call a branching layer, as in figure \ref{Figure.3DBranchingLayer}, showing the main branching layer $\gamma$, with the transition from the stairway indicated in levels $\omega$, $\alpha$ and $\beta$ below it. The black king will enter from the stairway below at {\tt $\omega$e3} and is then pushed by a pawn check from below to {\tt 1\ldots K$\alpha$e4 2.$\omega$e3+ K$\beta$e5 3.$\alpha$e4+ K$\gamma$e6}. If white could now check the black king, it would be checkmate, since the layer above stops the stairway ascent and the black bishop blocks the diagonal in layer $\gamma$, so black has nowhere to move his king.
\begin{figure}[h]
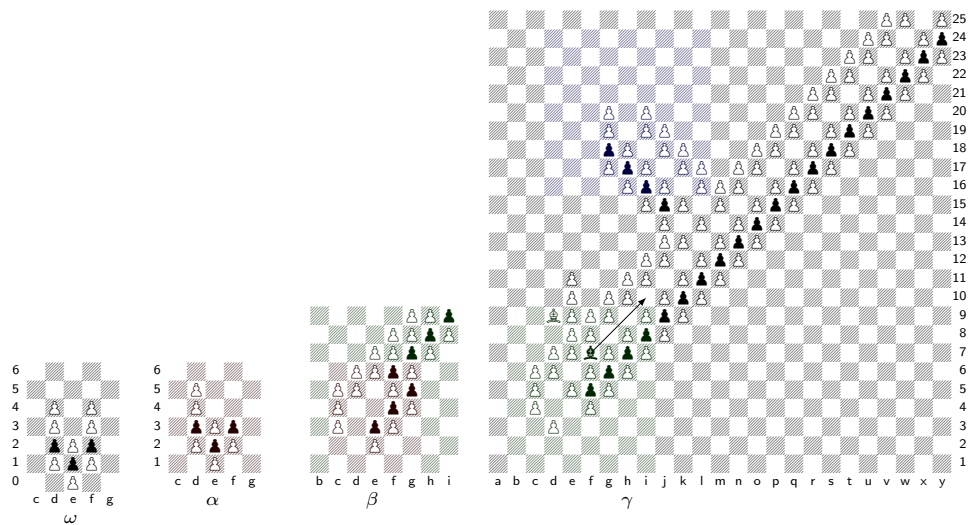

\hskip0pt
\lower 7pt\hbox{\vtop{\hbox{\chessboard[maxfield=g6,
            printarea=c0-g6,
            zero,
            boardfontsize=7pt,
            label=false,
            labelbottom=true,
            labelleft=true,
            labelfontsize=4pt,
            labelleftwidth=1.6ex,
            showmover=false,
            border=false,
            setfen=%
/%
/%
4P1P/%
4P1P/%
4pPp/%
4PpP/%
5P/%
]}%
\hbox{\hskip21pt$\scriptstyle\omega$}}}%
\hfil
\vtop{\hbox{\chessboard[maxfield=g7,
            printarea=c1-g6,
            boardfontsize=7pt,
            label=false,
            labelbottom=true,
            labelleft=true,
            labelfontsize=4pt,
            labelleftwidth=1.6ex,
            showmover=false,
            border=false,
            fieldcolor=red!15!black,
            setfontcolors,
            setfen=%
/%
/%
3P/%
3P/%
3pPp/%
3PpP/%
4P/%
]}%
\hbox{\hskip21pt$\scriptstyle\alpha$}}%
\hfil
\vtop{\hbox{\chessboard[maxfield=i8,
            printarea=b0-i8,
            zero,
            boardfontsize=7pt,
            label=false,
            labelbottom=true,
            labelfontsize=4pt,
            labelleftwidth=1.6ex,
            showmover=false,
            border=false,
            coloremphstyle=\color{green!15!black},
            empharea=a0-i8,
            coloremphstyle=\color{red!15!black},
            empharea=c0-g5,
            setfen=%
7PPpP/%
6PPpP/%
5PPpP/%
4PPpP/%
3PP1Pp/%
3P2pP/%
3P1pP/%
5P/%
]}%
\hbox{\hskip 28pt$\scriptstyle\beta$}}%
\hfil
\vtop{\hbox{\chessboard[maxfield=y25,
            boardfontsize=7pt,
            label=false,
            labelbottom=true,
            labelfontsize=4pt,
            labelrightwidth=1.6ex,
            labelright=true,
            showmover=false,
            margin=false,
            border=false,
            coloremphstyle=\color{green!15!black},
            empharea=b1-i9,
            coloremphstyle=\color{blue!25!black},
            empharea=d16-l24,
            pgfstyle=straightmove,
            linewidth=.05ex,
            backmove=f7-i10,
            setfen=%
21PP1PpP/%
20PP1PpP/%
19PP1PpP/%
18PP1PpP/%
17PP1PpP/%
6P1P7PP1PpP/%
6P1PP5PP1PpP/%
6pP1PP3PP1PpP/%
6PpP1PP1PP1PpP/%
7PpP1PPP1PpP/%
8PpP1P1PpP/%
9P1P1PpP/%
9PP1PpP/%
8PP1PpP/%
4P2PP1PpP/%
4P1PP1PpP/%
3BPPP1PpP/%
4PP1PpP/%
3PPbPpP/%
2PP1PpP/%
2P1PpP/%
2P2P/%
3P]}%
\hbox{\hskip50pt$\scriptstyle\gamma$}}
\caption{Transition from stairwell to branching layer $\gamma$; additional pawns in layer above and below will confine black king to the channel}\label{Figure.3DBranchingLayer}%
\label{Figure.3DBranchingLayer}
\end{figure}
But white has no immediate checking move. Instead, white sets up the check with {\tt 4.$\gamma$d4}, threatening mate with {\tt $\gamma$d5\#}, unless black moves the bishop out of the way. Note that black should not capture the white pawn at {\tt $\gamma$e8}, since then white will capture with his bishop, leading again to checkmate {\tt $\gamma$d5\#}. We assume similarly that additional white bishops are placed just behind the white pawns in the layers above and below (and those white bishops should similarly be surrounded by white pawns preventing them from causing any immediate damage to the tree structure). Thus, the only direction for black to move his bishop out of the way is up the free diagonal in layer $\gamma$. The main point---indeed, the main idea of this entire position---consists of the observation now that black will move his bishop from {\tt$\gamma$f7} up some distance in this diagonal channel, past finitely many of the branching-off passages of the sort of which only one is shown in this figure, thereby giving black the chance eventually to go into any of them that he wishes, while also being forced to choose at least one. White will now continue in layer $\gamma$ with checking moves {\tt 5.d5+ Kf7 6.e6+ Kg8 7.f7+ Kh9 8.g8+ Ki10}, and so on. Eventually, black reaches one of the branching-off channels, such as at {\tt 12.k12+ Km14 13.l13+}, where he faces a free branching choice. On the one hand, he may simply ignore the branch channel and continue up the main diagonal with {\tt 13\ldots Kn15 14.m14+ Ko16} and so on. Eventually, however, he will be check-mated against his bishop, which blocks the channel higher up, unless he opts for one of the branching channels (and this is the key step, where we have effectively forced black to choose one of the infinitely many branching channels available). To illustrate with choosing the pictured channel, consider {\tt 13\ldots Kl15}. If white advances the {\tt m} pawn, then it could be captured by the black bishop coming back down, which could then be captured by the white pawn now at {\tt l13}, but then since this pawn is not protected, black could capture it safely with his king and find a momentary haven, sufficient to win with his mate-in-two position elsewhere. So instead white checks with {\tt 14.k14+}. If black moves his king, play continues with {\tt 14\ldots Kk16 15.m15+ Kj17 16.k16+} and so on up the channel. Alternatively, if black captures the pawn with {\tt 14\ldots xk14}, then play continues with {\tt 15.xk14+}, which is protected from a pawn in upper layer, and then {\tt 15\ldots Kk16 16.m15+ Kj17} and so on again up the channel. Thus, black is able in effect to choose any of the branching channels before his bishop, and is thereby able when making the original bishop move for this layer to have decided on any of the infinitely many branching channels, and simply move his bishop far enough out so that he is able when reaching that channel to opt for it. In this way, black can choose any desired side channel.

Let us now flesh out how the position transforms from the branching layer $\gamma$ to another stairway, which may lead to another branching layer or to a dead-end in which black is checkmated. The left board $\gamma$ here shows the corresponding part of layer $\gamma$ in figure \ref{Figure.3DBranchingLayer} at the end of the side branch channel, after the black king has opted for that side channel.
\begin{figure}[h]
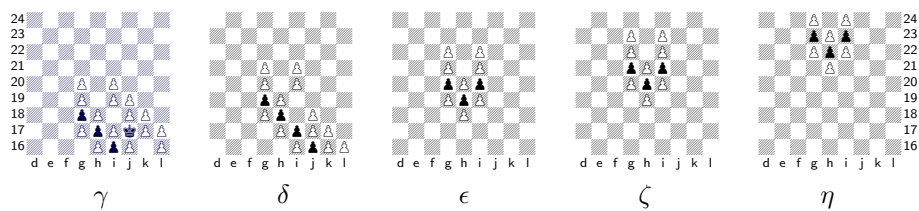

\hfil\chessboard[maxfield=l24,
            printarea = d16-l24,
            startfen=d24,
            boardfontsize=6pt,
            label=false,
            labelleft=true,
            labelbottom=true,
            labelfontsize=4pt,
            labelleftwidth=2ex,
            showmover=false,
            border=false,
            margin=false,
            arrow=to,
            backmove={e3-e4},
            coloremphstyle=\color{blue!25!black},
            empharea=d16-l25,
            setfen=%
/%
/%
/%
/%
3P1P/%
3P1PP/%
3pP1PP/%
3PpPkPP/%
4PpP1PP/%
]%
\hfil
\chessboard[maxfield=l25,
            printarea = d17-l25,
            startfen=d25,
            boardfontsize=6pt,
            label=false,
            labelbottom=true,
            labelfontsize=4pt,
            labelleftwidth=1.6ex,
            showmover=false,
            border=false,
            margin=false,
            setfen=%
/%
/%
/%
3P1P/%
3P1P/%
3pP/%
3Pp1P/%
4PpPP/%
5PpPP
]%
\hfil
\chessboard[maxfield=l24,
            printarea = d16-l24,
            startfen=d24,
            boardfontsize=6pt,
            label=false,
            labelbottom=true,
            labelfontsize=4pt,
            labelleftwidth=1.6ex,
            showmover=false,
            border=false,
            margin=false,
            setfen=%
/%
/%
3P1P/%
3P1P/%
3pPp/%
3PpP/%
4P/%
/%
]%
\hfil
\chessboard[maxfield=l25,
            printarea = d17-l25,
            startfen=d25,
            boardfontsize=6pt,
            label=false,
            labelbottom=true,
            labelfontsize=4pt,
            labelleftwidth=1.6ex,
            showmover=false,
            border=false,
            margin=false,
            setfen=%
/%
3P1P/%
3P1P/%
3pPp/%
3PpP/%
4P/%
/%
/%
]%
\hfil
\chessboard[maxfield=l24,
            printarea = d16-l24,
            startfen=d25,
            boardfontsize=6pt,
            label=false,
            labelbottom=true,
            labelright=true,
            labelfontsize=4pt,
            labelrightwidth=2ex,
            showmover=false,
            margin=false,
            border=false,
            setfen=%
3P1P/%
3P1P/%
3pPp/%
3PpP/%
4P/%
/%
/%
/%
]\hfil%

\medskip

\hfill$\gamma$\ \ \ \ \ \hfill\ \ $\delta$\ \ \ \ \ \hfill\ \ $\epsilon$\ \ \ \ \ \hfill\ \ $\zeta$\ \ \ \ \ \hfill\ \ $\eta$\hfill\hskip0pt

\caption{The stairwell bottom: king enters from horizontal layer on bottom level $\alpha$ and will be forced by pawn checks to ascend the staircase $\tt \gamma h19\to\delta h20\to\epsilon h21\to\zeta h22\to\eta h23$, and so on.}\label{Figure.3dTransitionToStairwell}
\end{figure}
The lower right portion of board $\delta$ here shows part of what appears in the layer immediately above layer $\gamma$ in the main branching layer. White forces the king at {\tt$\alpha$j17} to enter the stairwell and ascend the stairs with: {\tt 1.$\gamma$k16+ K$\gamma$i18 2.$\gamma$j17+ K$\gamma$h19 3.$\gamma$i18+ K$\delta$h20 4.$\gamma$h19+ K$\epsilon$h21 5.$\delta$h20+ K$\zeta$h22 6.$\epsilon$h21+ K$\eta$h23 7.$\zeta$h22+} and so on. The point now is that this stairway can lead to another branching layer as in figure \ref{Figure.3DBranchingLayer}, with the whole assembly following the abstract branching structure of $T$ in the manner shown in figure \ref{Figure.EmbeddedTree}.

We take ourselves now to have sketched an explanation of the various modules that can be used to build up the overall position in infinite three-dimensional chess corresponding to a fixed tree $T$ on $\omega$. Each branching node $u$ of $T$ corresponds to its own branching layer in the manner of figure \ref{Figure.3DBranchingLayer}, whose side channels will correspond to the successors of $u$ in $T$, which may lead via the stairway configurations to branching nodes on higher layers, if those successor nodes lead to further branching nodes in $T$, or to dead-ends in which black can be checkmated, if those successor nodes are leaves in $T$.

Although we admit that the detailed accounts of complex positions in infinite chess can be finicky, let us mention the essential features we believe that our position exhibits. For a fixed tree $T$ on $\omega$, we have described a position $p_T$ in infinite three-dimensional chess, in which white will be able to force the black king through a series of channels corresponding to the nodes of $T$. In particular, this correspondence is tight enough that any strategy for black in the climbing-through-$T$ game can be implemented as a strategy for black in the infinite chess position starting from position $p_T$. As white forces the black king through the channels, it is as though black is standing on the corresponding node of $T$. When this is a branching node of $T$, then the black king is in the corresponding branching layer of $p_T$, which allows black to send his king into any of the side-branching channels, which correspond to the successor nodes in $T$ of that branching node.

It follows that if the tree $T$ is well-founded, then it has no infinite branches, and so ultimately the black king must find itself in a dead-end channel. So white has a winning strategy to win from the chess position $p_T$. Furthermore, the game value of this position $p_T$ must be at least as great as the game value of the climbing-through-$T$ game, since as we have explained, any strategy in the climbing-through-$T$ game can be simulated in black's play from position $p_T$, and consequently the game value of any node in $T$ is bounded by the game value of the position arising from corresponding play from $p_T$. Basically, black will be able to continue playing in the chess position provided that he still has moves to play in the corresponding climbing-through-tree position. Note that the game value of the chess position will generally be somewhat larger than the corresponding position in the tree $T$, since simulating one move in climbing-through-$T$ generally takes many moves in infinite chess, particularly at the branching nodes. Since lemma \ref{Lemma.ValueOfClimbingThroughT} shows that the game values of the climbing-through-$T$ games for various $T$ are unbounded in the countable ordinals, it follows that the game values of the positions in infinite three-dimensional chess must also be unbounded in the countable ordinals, and so the omega one of infinite three-dimensional chess is as large as it can be $$\omegaoneChthreei=\omega_1.$$ This completes the proof of theorem \ref{Theorem.3DAnyValueAtttained}.

We would like to make a few final observations about the proof of theorem \ref{Theorem.3DAnyValueAtttained}. First, observe that by considering only the case of computable well-founded trees $T$, what we attain is a computable position $p_T$ in infinite three-dimensional chess, whose value is at least as great as the rank of $T$. Since these values are unbounded in $\omega_1^{ck}$, and furthermore the value of any computable position with a value is bounded by $\omega_1^{ck}$, it follows that the omega one of computable infinite positions in infinite three-dimensional chess is as large as it can be.

\begin{corollary}\label{Corollary.OmegaOneCh3c}
 Every computable ordinal arises as the game value of a computable position of infinite three-dimensional chess. Consequently,
 $$\omegaoneChthreec=\omega_1^{ck}.$$
\end{corollary}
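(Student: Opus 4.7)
The plan is to observe that the construction of $p_T$ in the proof of theorem \ref{Theorem.3DAnyValueAtttained} is uniformly computable in $T$, and then to invoke the classical fact that every computable ordinal is realized as the rank of a computable well-founded tree on $\omega$.

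First I would verify the tree-existence half. Given any computable ordinal $\alpha$, fix a computable well-ordering of $\omega$ of order type $\alpha$; by a standard unfolding, this yields a computable well-founded tree $T_\alpha\of\omega^{\ltomega}$ of rank $\alpha$. (Alternatively, one may proceed by effective transfinite recursion using the operations $T\mapsto 1+T$ and $(T_n)\mapsto\oplus_n T_n$ from the discussion preceding lemma \ref{Lemma.ValueOfClimbingThroughT}, at each stage using a computable index for $\alpha$ via Kleene's $\mathcal{O}$ to assemble a computable index for $T_\alpha$.)

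Next I would check that the assignment $T\mapsto p_T$ from the proof of theorem \ref{Theorem.3DAnyValueAtttained} is computable. The position $p_T$ was built out of finitely many local modules: a branching layer for each branching node of $T$, stairway transitions between layers, and dead-end checkmate caps at leaves, all stitched together in $\Z^3$ along a pattern that tracks the edge relation of $T$. Given a cell $(x,y,z)\in\Z^3$, deciding which module (if any) contains it and what piece (if any) sits there reduces to finitely many membership queries in $T$ together with bounded arithmetic on the coordinates. Hence from a computable index for $T$ one extracts a computable index for the board-position function of $p_T$, and from this (together with the fact that infinitely many pieces occur, which is itself recorded in the standard way) a computable representation of $p_T$ as a position in the sense fixed in section \ref{Section.TheOmegaOneOfChess}.

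Combining these, for each computable ordinal $\alpha$ take $T=T_\alpha$; by theorem \ref{Theorem.3DAnyValueAtttained} the game value of $p_{T_\alpha}$ is at least the rank of $T_\alpha$, which is $\alpha$, and $p_{T_\alpha}$ is computable. Hence the computable positions of infinite three-dimensional chess realize game values cofinal in $\omega_1^{ck}$, giving $\omegaoneChthreec\geq\omega_1^{ck}$. The reverse inequality $\omegaoneChthreec\leq\omega_1^{ck}$ is the computable analogue of theorem \ref{Theorem.UpperBounds}, proved by relativizing the Blass/Welch $\Sigma_1$-boundedness argument in the admissible structure $L_{\omega_1^{ck}}$ to a computable real coding $p_T$; this gives the stated equality. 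The only real obstacle is the uniformity in step two, i.e., making sure the modular gluing used in theorem \ref{Theorem.3DAnyValueAtttained} really does depend on $T$ in a Turing-computable rather than merely arithmetic way; but since each module occupies a region of bounded dimensions in two of the three coordinate directions and is placed at coordinates determined by a finite walk through $T$, the verification is routine.
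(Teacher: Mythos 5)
Your proposal is correct and follows essentially the same route as the paper: restrict theorem \ref{Theorem.3DAnyValueAtttained} to computable well-founded trees $T$, note that the modular construction makes $p_T$ computable with value at least $\operatorname{rank}(T)$, use the fact that computable well-founded trees have ranks cofinal in $\omega_1^{ck}$, and combine with the upper bound from theorem \ref{Theorem.UpperBounds}. The only difference is that you spell out the uniform computability of $T\mapsto p_T$ and the tree-existence fact explicitly, which the paper leaves as immediate observations.
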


Second, we note that a variety of interesting positions are obtained by using {\it two} tree simulations, with opposite colors and proceeding in opposite directions. That is, rather than giving black a mate-in-two position elsewhere, we give black his own embedded tree, which is like mate-in-$2$ in the weak sense we have observed that every move is either checking or threatening mate-in-one. So the main line of play for this combination-of-two-trees position will have black and white alternately seizing control, making long series of checking moves until they come to the branch layer, at which time the opposing player will seize control for a time. By comparing the ranks of the two trees, one will realize interesting game values this way.

Finally, we observe that in our proof of theorem \ref{Theorem.3DAnyValueAtttained}, it appears that we don't really need the full scope of three dimensions, but rather, it would be enough to play on a board of type $\mathbb{Z}\times\mathbb{Z}\times m$, where the third dimension has only sufficiently large finite diameter $m$. The idea would be to use bridges that go up and come back down, rather than stairways that go only up, and to place all the branching diagonals in one layer, parallel to each other but spaced apart. When following a side branch, now, the black king will be forced to ascend a bridge that carries him over to the beginning of another branching diagonal, descending again to the original layer, while skipping over the intervening branching diagonals.


\begin{thebibliography}{BHS12}

\bibitem[BHS12]{BrumleveHamkinsSchlicht2012:TheMateInNProblemOfInfiniteChessIsDecidable}
Dan Brumleve, Joel~David Hamkins, and Philipp Schlicht.
\newblock The mate-in-n problem of infinite chess is decidable.
\newblock In S.Barry Cooper, Anuj Dawar, and Benedikt Löwe, editors, {\em How
  the World Computes}, volume 7318 of {\em Lecture Notes in Computer Science},
  pages 78--88. Springer Berlin Heidelberg, 2012.

\bibitem[Bla]{MO63456Blass:CheckmateInOmegaMoves}
Andreas Blass.
\newblock Checkmate in $\omega$ moves?
\newblock MathOverflow, 2012.
\newblock {\tt http://mathoverflow.net/questions/63456} (version: 2012-01-22).

\bibitem[Bla72]{Blass1972:ComplexityOfWinningStrategies}
Andreas Blass.
\newblock Complexity of winning strategies.
\newblock {\em Discrete Mathematics}, 3(4):295 -- 300, 1972.

\bibitem[JS71]{JockuschSoare1971:AMinimalPairOfPi^0_1Classes}
Carl~G. Jockusch, Jr. and Robert~I. Soare.
\newblock A minimal pair of {$\Pi_{1}^{0}$} classes.
\newblock {\em J. Symbolic Logic}, 36:66--78, 1971.

\bibitem[Pri07]{Pritchard2007TheClassifiedEncyclopediaOfChessVariants}
David Pritchard.
\newblock {\em The classified encyclopedia of chess variants}.
\newblock J. Beasley, Harpenden, England, 2007.

\bibitem[Res11]{IBMPonderThisChallenge2011:InfiniteChess}
IBM Research.
\newblock Ponder this challenge.
\newblock {\tt http://domino.research.ibm.com /Comm/wwwr\_ponder.nsf/challenges/December2011.html},
  December 2011.

\bibitem[Sta10]{MO27967Stanley:DecidabilityOfInfiniteChess}
Richard Stanley.
\newblock {D}ecidability of chess on an infinite board.
\newblock MathOverflow, 2010.
\newblock {\tt http://mathoverflow.net/questions/27967} (version: 2010-07-20).

\bibitem[W{\"a}s11]{MO63423Wästlund:CheckmateInOmegaMoves}
Johan W{\"a}stlund.
\newblock {C}heckmate in $\omega$ moves?
\newblock MathOverflow, 2011.
\newblock {\tt http://mathoverflow.net/questions/63423} (version: 2011-04-29).

\end{thebibliography}

\end{document}